\newtheorem{teo}{Theorem}[section]
\newtheorem{lem}[teo]{Lemma}
\theoremstyle{remark}
\newtheorem{re}[teo]{Remark}
\theoremstyle{definition}
\newtheorem{de}[teo]{Definition}
\newcommand{\CChi}{\mbox{\Large$\chi$}}
\renewcommand{\S}{\mathcal{S}}
\newcommand{\dd}{\text{\rm d}\mkern0.5mu}
\def\R{{\mathbb R}}
\def\N{{\mathbb N}}
\def\T{{\mathbb{T}_m}}
\title[Dirichlet-to-Neumann maps on Trees]{Dirichlet-to-Neumann maps on trees}
\author[L. M. Del Pezzo]{Leandro M. Del Pezzo}
\author[ N. Frevenza ]{Nicol\'{a}s Frevenza}
\author[ J. D. Rossi ]{Julio D. Rossi}
    \address{Leandro M. Del Pezzo, Nicol\'{a}s Frevenza and Julio D. Rossi
    \hfill\break\indent
    CONICET and Departamento  de Matem{\'a}tica, FCEyN, \hfill\break\indent
    Universidad de Buenos Aires,
    \hfill\break\indent Pabellon I, Ciudad Universitaria (1428),\hfill\break\indent
    Buenos Aires, Argentina.}
    \email{{\tt  ldelpezzo@utdt.edu, nfrevenza@dm.uba.ar, jrossi@dm.uba.ar}}
\begin{document}

\keywords{Dirichlet-to-Neumann map, Mean value formulas, Equations on trees.\\
\indent 2010 {\it Mathematics Subject Classification.} 35J05, 35R30, 31E05, 37E25.}

\begin{abstract} 
	In this paper we study the Dirichlet-to-Neumann 
	map for solutions to mean value formulas on trees. 
	We give two alternative definition of the Dirichlet-to-Neumann map.
	For the first definition (that involves the product of a 
	``{}gradient"{} with a ``{}normal vector"{}) and for a linear mean 
	value formula on the directed tree (taking into account only the 
	successors
	of a given node) we obtain that the Dirichlet-to-Neumann map is 
	given by $g\mapsto cg'$ (here $c$ is an explicit constant). 
	Notice that this is a local operator of order one. 
	We also consider linear undirected mean value formulas (taking into 
	account not only the successors but the ancestor and the successors 
	of a given node) and prove a similar result. For this kind
	of mean value formula we include some existence and uniqueness 
	results for the associated Dirichlet problem.
	Finally, we give an alternative definition 
	of the Dirichlet-to-Neumann map (taking into account differences 
	along a given branch of the tree). 
	With this alternative definition, for a certain range of 
	parameters, we obtain that the Dirichlet-to-Neumann map is given by 
	a nonlocal operator (as happens for the classical Laplacian in the 
	Euclidean space). 
\end{abstract}

\maketitle

\section{Introduction}\label{intro}

	Informally, the Dirichlet-to-Neumann map works as follows: given a 
	function $g$ on $\partial \Omega$, solve 
	the Dirichlet problem for the Laplacian with this datum inside the 	
	domain and then compute the normal derivative of the 
	solution on $\partial \Omega$ to obtain the operator $\Lambda (g)$. 
	Our main goal in this paper is to study the 
	Dirichlet-to-Neumann map for solutions to mean value formulas on 
	trees.

	The study of Dirichlet-to-Neumann maps for partial differential 
	equations (PDEs) has a rich history in the literature. For the 
	classical second order operator $\text{div} (a(x) Du) =0$ the 
	Dirichlet-to-Neumann map is related to the widely studied 
	Calderon's inverse problem, that is, knowing the Dirichlet-to-Neumann
	map, $g\mapsto \Lambda_a (g)$, find the coefficient $a(x)$ (see for instance \cite{C} and the 
	survey \cite{U}). 
	This problem has a well known application
	in electrical impedance tomography. 
	The Dirichlet-to-Neumann map is also related to 
	fractional powers of the Laplacian. For the classical Laplacian in 
	a half space it is well
	known that the Dirichlet-to-Neumann map gives the fractional 
	Laplacian (with power $1/2$), that is a nonlocal operator, 
	see \cite{CS}.

	Let us include now a brief comment on previous bibliography on 
	mean value formulas. Mean value formulas characterize solution 
	to certain PDEs. For example, in the Euclidean setting, 
	the validity of the mean value formula in balls characterize 
	harmonic functions.
	Nonlinear mean value properties that characterize solutions to 	
	nonlinear PDEs can also be found, for example, in \cite{MPR,rv,hr,hr1}. 
	These mean value properties reveal to be quite useful when 
	designing numerical schemes that approximate solutions to the 
	corresponding nonlinear PDEs, see \cite{O1,O2}. For mean values on 
	graphs (and trees) we refer to \cite{ary,KK,KW,KLW,Ober} and
	\cite{BBGS,s-tree,s-tree1} and references therein.

Linear and nonlinear mean value properties on trees are models that are close (and related to) 
to linear and nonlinear PDEs, 
hence it seems natural to look for the Dirichlet-to-Neumann map in the context of solutions to a mean value formula defined in a tree. The
analysis performed here can be viewed as just a first step into the study
of the general Calderon problem in a tree. 

It turns out that our first step in the analysis is to find a suitable definition for this Dirichlet-to-Neumann map on the tree. We have
two different definitions for this concept.
Our first definition starts with the idea of what is the normal derivative: we take the
``gradient'' of a function at a node and the inner product with a ``normal vector'', then
we multiply by a scaling parameter (a suitable power of the distance of the node to the root of the tree) and 
compute the limit as the node goes to the boundary of the tree (see the precise definitions below). 
This definition combined with the fact that we have an explicit formula for the solution of the 
Dirichlet problem for the case of the linear averaging operator, allow us to
explicitly compute the Dirichlet-to-Neumann map for smooth data. Surprisingly, this Dirichlet-to-Neumann map
just gives a local operator, $g\mapsto c g'$. 
When the mean value formula that we consider also depends on the ancestor
(that is, for an undirected tree), we have an alternative definition 
    of the Dirichlet-to-Neumann map. In this alternative definition we just consider the difference 
    between the values of $u$ at two successive vertices in a branch of the tree and then compute the limit
    as the vertices go to the boundary (suitable scaled). For this second definition the Dirichlet-to-Neumann map can 
be also computed (under a hypothesis on the parameter that measures the influence of the ancestor in the mean value formula) 
and gives rise to a more involved nonlocal operator that we also describe here.

\subsection{Notations and statements of the main results} 

	Let us first introduce some notations needed for the precise 
	statement of the results contained in this paper.

	Let $m\in\mathbb{N}_{\ge2}$. A tree $\T$ with regular 
    $m-$branching is a graph that consists of
   the empty set $\emptyset$ (also called the root of the tree) and all finite sequences $(a_1,a_2,\dots,a_k)$ with $k\in\N,$ 
    whose coordinates $a_i$ are chosen from $\{0,1,\dots,m-1\}.$ 
    
    \begin{center}
        \pgfkeys{/pgf/inner sep=0.19em}
        \begin{forest}
            [$\emptyset$,
                [0
                    [0
                        [0 [,edge=dotted]]
                        [1 [,edge=dotted]]
                        [2 [,edge=dotted]]
                    ]
                    [1
                        [0 [,edge=dotted]]
                        [1 [,edge=dotted]]
                        [2 [,edge=dotted]]
                    ]
                    [2
                        [0 [,edge=dotted]]
                        [1 [,edge=dotted]]
                        [2 [,edge=dotted]]
                    ]
                ]
                [1
                    [0
                        [0 [,edge=dotted]]
                        [1 [,edge=dotted]]
                        [2 [,edge=dotted]]
                    ]
                    [1
                        [0 [,edge=dotted]]
                        [1 [,edge=dotted]]
                        [2 [,edge=dotted]]
                    ]
                    [2
                        [0 [,edge=dotted]]
                        [1 [,edge=dotted]]
                        [2 [,edge=dotted]]
                    ]
                ]
                [2
                    [0
                        [0 [,edge=dotted]]
                        [1 [,edge=dotted]]
                        [2 [,edge=dotted]]
                    ]
                    [1
                        [0 [,edge=dotted]]
                        [1 [,edge=dotted]]
                        [2 [,edge=dotted]]
                    ]
                    [2
                        [0 [,edge=dotted]]
                        [1 [,edge=dotted]]
                        [2 [,edge=dotted]]
                    ]
                ]    
            ]
        \end{forest}
       
       A tree with $3-$branching.
    \end{center}
    
    The elements in $\T$ are called vertices or nodes. 
    Each vertex $x$ has $m$ successors, obtained by adding another coordinate.
    We will denote by 
    \[
        \S(x)=\{(x,i)\colon i\in\{0,1,\dots,m-1\}\}
    \]
    the set of successors of the vertex $x.$ If $x$ is not the root then $x$ has a only an immediate predecessor
    (or ancestor), which we will denote as $\hat{x}.$
    A vertex $x\in\T$ is called a $k-$level vertex 
    ($k\in\mathbb{N}$) if $x=(a_1,a_2,\dots,a_k)$.   
    The level of $x$ is denoted by $|x|.$ 
    The set of all $k-$level vertices is denoted by $\T\!\!\!^k.$
    Given $x\in \T$ such that $|x|>0$ and $j\in\{1,\dots,|x|-1\},$
    we denote by $x^{-j}$ the predecessor of $(|x|-j)-$level of $x.$ 
    In this context, we use the following notation $x^0=x.$
    
    A branch $\pi$ of $\T$ is an infinite sequence of vertices, each followed by an immediate successor.
    The collection of all branches forms the boundary of $\T$,  denoted by $\partial\T$.
    We can observe that the mapping $\psi:\partial\T\to[0,1]$ defined as
    \[
        \psi(\pi)\coloneqq\sum_{k=1}^{+\infty} \frac{a_k}{m^{k}}
    \]
    is surjective, where $\pi=(a_1,\dots, a_k,\dots)\in\partial\T$ and
    $a_k\in\{0,1,\dots,m-1\}$ for all $k\in\mathbb{N}.$ Whenever
    $x=(a_1,\dots,a_k)$ is a vertex, we set
    \[
        \psi(x)\coloneqq\psi(a_1,\dots,a_k,0,\dots,0,\dots).
    \]
    We can also associate to a vertex $x$ an
    interval $I_x$ of length $\tfrac{1}{m^{|x|}}$ as follows
    \[
        I_x\coloneqq\left[\psi(x),\psi(x)+\frac1{m^{|x|}}\right].
    \]
    Observe that for all $x\in \T$, $I_x \cap \partial\T$ is the
    subset of $\partial\T$ consisting of all branches that pass through $x$.
    In addition, for any branch $\pi=(a_1,\dots, a_k,\dots)\in\partial\T,$ we can
    associate to $\pi$ the sequence of intervals $\{I_{\pi,k}\}$ given by
    \[
        I_{\pi,k}\coloneqq I_{x_k}\quad \text{with }x_k=(a_1,\dots,a_k)
    \]
    for any $k\in\mathbb{N}.$ For any $k\in\mathbb{N},$ it is easy to check that  
    $I_{\pi,k+1}\subset I_{\pi,k}$ and $\psi(\pi)\in I_{x_k}.$

    Given a function $u:\T \to \mathbb{R}$ on the tree we define its gradient as
    the vector that encodes all the differences between $u(x)$ with the values of $u$ at the successors, $u(x,j)$, 
    that is, we let
    \[
        \nabla u (x) \coloneqq (u(x,0) - u(x),..., u(x,m-1) - u(x)),\qquad\forall x\in\T.
    \]
    
    Now, let us introduce the mean value formulas that we are interested in.
    Given $0\le \beta\le1,$ we say that a function $u\colon\T\to\R$ is a $\beta-$harmonic function on $\T$ if
    \[
        u(\emptyset)=\frac1m\sum_{j=0}^{m-1}u(j)
    \]
    and 
    \[
        u(x)=\beta u(\hat{x})+\dfrac{1-\beta}{m}\sum_{i=0}^{m-1} u(x,i),\qquad\forall x\in\T\setminus\{\emptyset\}.
    \]
    We shall often write harmonic function as an abbreviation for $0-$harmonic function. Notice that for $\beta =0$ we have
    that harmonic functions are solutions to
    \begin{equation} \label{lapla.usual}
    0= \sum_{i=0}^{m-1} \left(u(x,i) - u(x)\right) = \langle \nabla u (x), (1,...,1) \rangle .
    \end{equation}
    
    Note that if $\beta=\frac{1}{m},$  the definition of a $\beta-$harmonic function coincides with the classic mean value
    on the tree viewed as a graph (the value of $u$ at any node $x$ is just the mean value of $u$ at all the nodes that are connected to $x$). 
    Whereas for $\beta=0,$ 
    the definition coincides with the definition of a harmonic function
    on the arborescence (also called directed) tree. In this last case the equation involve only
    the values of $u$ at a node and its successors. See, for instance, 
    \cite{anandam,DPMR1,DPMR2,DPMR3,KLW,KW}.

    Given a bounded function  $g:[0,1]\to\R,$ 
    we say that a function $u$ is a solution to the 
    $\beta-$Dirichlet problem with boundary datum $g$ if $u$ is a $\beta-$harmonic function and verifies
    \[
        \lim_{k\to+\infty} u(x_k)= g(\psi(\pi)), 
        \qquad \forall \pi=(x_1, \dots, x_k, \dots)\in\partial\T.
    \]
    
    Our first result shows that for a continuous datum $g$
    there is existence and uniqueness of solutions  for the Dirichlet problem
    when $\beta<\tfrac12$, while when $\beta \geq \tfrac12$ 
    there are no non-constant bounded solutions. 
    Hence the Dirichlet problem with a continuous datum $g$ is solvable only when $\beta <\tfrac12$ 
    or when $g$ is a constant function.

    \begin{teo} \label{teo.harmonicas2} 
        Let $g:[0,1]\to\R$ be a continuous function.
        For any $0\le\beta<\tfrac12$,   
        there is a unique bounded solution to 
        the Dirichlet problem with boundary datum $g,$
        that is there is a unique bounded $\beta-$harmonic function $u_g$ such that
        \begin{equation} \label{BDP}
                 \lim\limits_{k\to+\infty} u_g(x_k)= 
                g(\psi(\pi)), \qquad \forall \pi=(x_1, \dots, x_k, \dots)\in\partial\T.
        \end{equation}
        Moreover, this solution can be explicitly computed and is given by
        \begin{equation} \label{formula.harmonica2}
            u_g(x)=
                \begin{cases}
                    \displaystyle\fint_{I_{x }} g(t) \dd t & \text{if }
                    \beta=0,\\[10pt]
                    \displaystyle p^{|x|} \fint_0^1 g(t) \dd t +  
                    \sum_{j=0}^{|x|-1} p^j(1-p) \fint_{I_{x^{-j}}} g(t) \dd t , & \text{if }
                    \beta\in(0,\tfrac12),
                \end{cases}
        \end{equation} 
        where $p = \tfrac{\beta}{1-\beta}\in(0,1)$ and $\fint$ denote the average integral, $\fint_A g = \tfrac{1}{|A|} \int_A g$.

        Finally, for any $\beta\geq 1/2$ every bounded $\beta-$harmonic function is constant.
    \end{teo}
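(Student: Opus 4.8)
The plan is to establish the three assertions of the theorem in turn: existence together with the explicit formula \eqref{formula.harmonica2}, uniqueness for $0\le\beta<\tfrac12$, and the Liouville-type statement for $\beta\ge\tfrac12$. The computational tool common to all three is the behaviour of the \emph{level averages}
\[
A_j(y):=\frac{1}{m^{j}}\sum_{z}u(z),
\]
where the sum runs over the $m^{j}$ descendants of $y$ at level $|y|+j$, so that $A_0(y)=u(y)$ and $A_1(y)=\tfrac1m\sum_i u(y,i)$. Averaging the $\beta$-harmonic equation over the level-$j$ descendants of a fixed $y$, and grouping each level-$(j-1)$ descendant with its $m$ children, gives the second-order linear recursion $A_{j+1}(y)=(1+p)A_j(y)-p\,A_{j-1}(y)$ for $j\ge1$, whose characteristic roots are $1$ and $p=\tfrac{\beta}{1-\beta}$; hence $A_j(y)=c_1(y)+c_2(y)p^{\,j}$, and matching $A_0(y)=u(y)$ and $A_1(y)=\tfrac{1}{1-\beta}\bigl(u(y)-\beta u(\hat y)\bigr)$ for $y\ne\emptyset$ identifies $c_1(y)=u(y)+\tfrac{p}{1-p}\bigl(u(y)-u(\hat y)\bigr)$, while the root identity gives $A_j(\emptyset)=u(\emptyset)$ for all $j$.

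\textbf{Existence.} For $\beta=0$ the function $u_g(x)=\fint_{I_x}g$ satisfies $u_g(x)=\tfrac1m\sum_i u_g(x,i)$ because $I_x$ is the disjoint union of the $m$ equal intervals $I_{(x,i)}$, and $u_g(x_k)\to g(\psi(\pi))$ along a branch $\pi$ since $|I_{x_k}|\to0$ and $g$ is continuous. For $\beta\in(0,\tfrac12)$ let $u_g$ be the right-hand side of \eqref{formula.harmonica2}. Rewriting the $\beta$-harmonic equation as $\tfrac1m\sum_i u(x,i)=(1+p)u(x)-p\,u(\hat x)$, one checks this identity for $u_g$ by a direct bookkeeping of the terms $\fint_{I_{x^{-j}}}g$, using again $\tfrac1m\sum_i\fint_{I_{(x,i)}}g=\fint_{I_x}g$, and likewise the root identity. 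For \eqref{BDP}, along $\pi=(x_1,x_2,\dots)$ one has $u_g(x_k)=p^{k}\fint_0^1 g+(1-p)\sum_{j=0}^{k-1}p^{\,j}\fint_{I_{x_{k-j}}}g$; since $(1-p)\sum_{j\ge0}p^{\,j}=1$, the intervals $I_{x_{k-j}}$ shrink to $\psi(\pi)$ for each fixed $j$, $g$ is continuous, and the tails $p^{k}$ and $\sum_{j\ge J}p^{\,j}$ are small, so this weighted average converges to $g(\psi(\pi))$. Finally $\|u_g\|_\infty\le\|g\|_\infty$, so $u_g$ is bounded.

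\textbf{Uniqueness for $0\le\beta<\tfrac12$.} By linearity it suffices to show that a bounded $\beta$-harmonic $u$ with $\lim_k u(x_k)=0$ along every branch vanishes. Here $0\le p<1$, so $A_j(y)\to c_1(y)$ as $j\to\infty$. On the other hand $A_j(y)$ equals the average over $t\in I_y$ of the value of $u$ at the level-$(|y|+j)$ vertex of the branch $\psi^{-1}(t)$, which is well defined for all but countably many $t$; by hypothesis this value tends to $0$ and it is bounded by $\|u\|_\infty$, so dominated convergence gives $A_j(y)\to0$. Hence $c_1(y)=0$, that is $u(\emptyset)=0$ and $u(y)=p\,u(\hat y)$ for every $y\ne\emptyset$; induction on the level then yields $u\equiv0$.

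\textbf{Liouville for $\beta\ge\tfrac12$.} If $\beta=1$ the equation reads $u(x)=u(\hat x)$, so $u$ is constant; assume $\tfrac12\le\beta<1$, so $p\ge1$. The recursion above still holds, but now its two fundamental solutions are the constant one and $p^{\,j}$ (or $j$ when $\beta=\tfrac12$), neither of which decays. Since $|A_j(y)|\le\|u\|_\infty$ for every $j$, the non-constant mode is absent, so $A_j(y)$ is independent of $j$; in particular $u(y)=A_0(y)=A_1(y)=\tfrac1m\sum_i u(y,i)$, which combined with the $\beta$-harmonic equation at $y$ gives $\beta u(y)=\beta u(\hat y)$, hence $u(y)=u(\hat y)$, for every $y\ne\emptyset$. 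Therefore $u$ is constant. The step I expect to be the main obstacle is the limit passage in the uniqueness argument: identifying $A_j(y)$ with the average over $I_y$ of the radial boundary-directed values of $u$ and invoking dominated convergence from \eqref{BDP}; everything else is either the algebra of the mean value identity or the elementary analysis of the recursion for $A_j$.
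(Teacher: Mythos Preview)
Your argument is correct and takes a genuinely different route from the paper. The paper proves uniqueness by first establishing a comparison principle for $\beta$-sub- and superharmonic functions (via a maximum-type argument on $u-v$), then obtains existence by a Perron construction using $\beta$-harmonic extensions of characteristic functions as barriers, and only afterwards verifies the explicit formula \eqref{formula.harmonica2} directly. For the Liouville part, the paper argues by following a single branch along which the increments $a_{n+1}-a_n$ are forced to grow at least geometrically with ratio $\tfrac{\beta}{1-\beta}\ge 1$.

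Your approach unifies all three parts through the level averages $A_j(y)$ and the second-order recursion $A_{j+1}=(1+p)A_j-pA_{j-1}$ with roots $1$ and $p$. Uniqueness then reduces to the dominated-convergence step $A_j(y)\to 0$, yielding $c_1(y)=0$ and hence $u(y)=p\,u(\hat y)$; Liouville follows because for $p\ge1$ boundedness kills the non-constant mode, forcing $A_0(y)=A_1(y)$ and hence $u(y)=u(\hat y)$. This is more economical: it avoids both the comparison principle and the Perron machinery entirely, and treats the two regimes $p<1$ and $p\ge1$ symmetrically. What the paper's approach buys, on the other hand, is a comparison principle that is of independent interest (it is reused later for the strong comparison principle and would extend to nonlinear averaging operators where no explicit formula is available), whereas your level-average argument is tied to the linearity of the mean value formula. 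The existence portion is essentially the same in both: a direct verification of \eqref{formula.harmonica2}, with the paper organizing it via the one-step identity $u(x)=p\,u(\hat x)+(1-p)\fint_{I_x}g$.
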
	
    
    Formula \eqref{formula.harmonica2} allows us to obtain existence and uniqueness 
    of solutions for more general boundary data, see Remark \ref{rem.g.masgemeral}.

    As an interesting property of the solutions we notice that for this 
    notion of $\beta-$harmonic 
    functions, in the case $\beta\in(0,\tfrac12)$ 
    we have a strong comparison principle 
    (this property does not hold in the case $\beta=0,$ that is, 
    for solutions to the usual Laplacian on the arborescence tree).

    \begin{teo} \label{teo.harmonicas2.strongCP} 
        For any $0<\beta<\tfrac12$,  
        two bounded $\beta-$harmonic function $u,v$  
        that are ordered,
        \[
            u(x) \leq v(x) \qquad \forall x \in \T ,
        \]
        and touch at one vertex, $u(x_0)=v(x_0)$ for some $x_0\in \T$, must coincide in the whole tree, that is, \[u\equiv v \quad \text{ on } \T.\]
    \end{teo}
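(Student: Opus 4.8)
The natural first move is to pass to the difference $w\coloneqq v-u$. Both identities defining $\beta$-harmonicity are linear, so $w$ is again $\beta$-harmonic; moreover $w\ge 0$ on $\T$ by hypothesis and $w(x_0)=0$. Thus it suffices to show that a nonnegative $\beta$-harmonic function vanishing at a single vertex vanishes identically, and the plan is to obtain this by propagating the zero along the edges of the tree.

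The engine of the proof is the following local fact: if $w(y)=0$ for some $y\in\T$, then $w$ vanishes at every vertex adjacent to $y$, i.e.\ at all successors in $\S(y)$ and, when $y\ne\emptyset$, at the ancestor $\hat y$ as well. Indeed, for $y\ne\emptyset$ the defining identity reads
\[
    0 \;=\; w(y) \;=\; \beta\,w(\hat y) + \frac{1-\beta}{m}\sum_{i=0}^{m-1} w(y,i),
\]
and since $0<\beta<\tfrac12$ both coefficients $\beta$ and $1-\beta$ are strictly positive, so this exhibits $0$ as a sum of nonnegative terms; hence $w(\hat y)=0$ and $w(y,i)=0$ for every $i$. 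For $y=\emptyset$ the same reasoning applied to $0=w(\emptyset)=\tfrac1m\sum_{j=0}^{m-1}w(j)$ gives $w(j)=0$ for all $j$.

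To conclude, I would start at $x_0$ and apply this fact repeatedly along the finite chain of ancestors joining $x_0$ to the root, reaching $w(\emptyset)=0$. Then I argue by induction on the level: $w$ vanishes on the zeroth level, and if $w$ vanishes on every level-$k$ vertex, the ``successor part'' of the local fact forces $w$ to vanish on every level-$(k+1)$ vertex. Therefore $w\equiv 0$ on $\T$, that is $u\equiv v$. I do not anticipate a serious difficulty here; the two points that must not be overlooked are that the strict positivity of $\beta$ is exactly what lets the zero climb up to the ancestor — which is why the statement fails for $\beta=0$, where a zero can only descend into the subtree rooted at $x_0$ — and that the root obeys a different mean value identity and hence has to be treated as a separate case in the local step.
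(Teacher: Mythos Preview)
Your proof is correct and follows essentially the same approach as the paper: reduce to the nonnegative $\beta$-harmonic difference $w=v-u$, use the mean value identity at a zero of $w$ to write $0$ as a positive combination of the nonnegative values $w(\hat y)$ and $w(y,i)$, and propagate. The paper is slightly terser (it writes the identity at $x_0$ and then just says ``Hence $u\equiv v$''), whereas you spell out the propagation explicitly by climbing to the root and then inducting on the level; your remark about why $\beta>0$ is essential also matches the paper's subsequent counterexample for $\beta=0$.
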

    
    \medskip

    After proving the existence and uniqueness of a solution with the explicit formula 
    \eqref{formula.harmonica2}, we are ready to introduce our first version of the Dirichlet-to-Neumann map.
    
    \begin{de}
         Let $0\le\beta<\tfrac1{2},$ and fix a vector
         $\eta = (\eta_0,\dots,\eta_{m-1})\in\R^m$ 
         (the ``{}normal vector"{}). 
         The Dirichlet-to-Neumann map for $\beta-$harmonic functions 
         in the direction of $\eta$, that we call $\Lambda_{\beta,\eta}\colon C^2([0,1])\to C^1([0,1])$ 
         is defined by
         \[
            \Lambda_{\beta,\eta}(g)(\psi(\pi))\coloneqq
                \lim\limits_{k\to\infty} m^{k}\langle \nabla u_g(x_k),\eta \rangle
         \]
         for any $\pi=(x_1,\dots,x_k,\dots)\in\partial\T.$ Here $u_g$ is the $\beta-$harmonic function on $\T$ with boundary value $g$.
    \end{de}
    
    In the case $\beta=0$, 
    we obtain the following explicit expression 
    for the Dirichlet-to-Neumann map depending on the usual derivative $g'.$
    
    \begin{teo}\label{teo:intro.carDN1.1}
        Let $\beta=0,$ $g\in C^2([0,1]),$ $\eta = (\eta_0,\dots,\eta_{m-1})\in\R^m$ 
        and $u_g$ be the solution of the Dirichlet problem ($u_g$ is a harmonic function in $\T$) with boundary datum $g.$ 
        Then for any $\pi=(x_1, \dots, x_k,\dots)\in \partial\T,$ we have
        \[
               \Lambda_{0,\eta}(g)(\psi(\pi))=g'(\psi(\pi))\langle \eta,\omega_m\rangle,
        \] 
        where 
        \[
                \omega_m = 
                \begin{cases}
                    \left(\tfrac{1-m}{2m},\tfrac{3-m}{2m},\dots, 
                    \tfrac{-2}{2m}, 0, 
                    \tfrac{2}{2m},\dots,\tfrac{m-3}{2m},\tfrac{m-1}{2m}\right)
                    &\text{if } m \text{ is odd},\\[5pt]
                    \left(\tfrac{1-m}{2m},\tfrac{3-m}{2m},\dots, \tfrac{-2}{2m}, \tfrac{2}{2m},\dots,\tfrac{m-3}{2m},\tfrac{m-1}{2m}\right)
                     &\text{if } m \text{ is even}.
                \end{cases}
        \]
    \end{teo}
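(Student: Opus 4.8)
The plan is to start from the explicit formula for the harmonic function in the case $\beta = 0$ given in Theorem \ref{teo.harmonicas2}, namely $u_g(x) = \fint_{I_x} g(t)\,\dd t$, and compute the gradient $\nabla u_g(x_k)$ componentwise. For a fixed branch $\pi = (x_1,\dots,x_k,\dots)$, the interval $I_{x_k}$ has length $m^{-k}$ and left endpoint $\psi(x_k)$, while $I_{(x_k,i)}$ is the $i$-th of the $m$ equal subintervals of $I_{x_k}$, of length $m^{-(k+1)}$. Thus
\[
u_g(x_k,i) - u_g(x_k) = \fint_{I_{(x_k,i)}} g - \fint_{I_{x_k}} g = m^{k+1}\int_{I_{(x_k,i)}} g - m^k\int_{I_{x_k}} g.
\]
First I would Taylor expand $g$ around the point $\psi(\pi)$ (using $g\in C^2$) on the interval $I_{x_k}$, which shrinks to $\psi(\pi)$; writing $g(t) = g(\psi(\pi)) + g'(\psi(\pi))(t - \psi(\pi)) + O(|t-\psi(\pi)|^2)$ and integrating, the constant term cancels in the difference $u_g(x_k,i)-u_g(x_k)$, and the first-order term produces the leading contribution of order $m^{-k}$, while the $O(m^{-2k})$ remainder will vanish after multiplying by $m^k$ and taking $k\to\infty$.

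The key computation is then to identify the coefficient of $g'(\psi(\pi))$. The average of $(t-\psi(\pi))$ over $I_{(x_k,i)}$ is the midpoint of that subinterval minus $\psi(\pi)$, and the average over $I_{x_k}$ is its own midpoint minus $\psi(\pi)$. Subtracting, the $\psi(\pi)$ and $\psi(x_k)$ terms drop out and one is left with a purely combinatorial quantity: the midpoint of the $i$-th of $m$ equal subintervals of an interval of length $\ell$ minus the midpoint of the whole interval equals $\ell\bigl(\tfrac{2i+1}{2m} - \tfrac12\bigr) = \ell\cdot\tfrac{2i+1-m}{2m}$, with $\ell = m^{-k}$. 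Hence $m^k\bigl(u_g(x_k,i)-u_g(x_k)\bigr) \to g'(\psi(\pi))\cdot\tfrac{2i+1-m}{2m}$, so that $m^k\langle \nabla u_g(x_k),\eta\rangle \to g'(\psi(\pi))\sum_{i=0}^{m-1}\eta_i\cdot\tfrac{2i+1-m}{2m} = g'(\psi(\pi))\langle\eta,\omega_m\rangle$ with $\omega_m = \bigl(\tfrac{2\cdot0+1-m}{2m},\dots,\tfrac{2(m-1)+1-m}{2m}\bigr) = \bigl(\tfrac{1-m}{2m},\tfrac{3-m}{2m},\dots,\tfrac{m-1}{2m}\bigr)$, which matches the stated vector (the entry $0$ appearing precisely when $m$ is odd, at $i = (m-1)/2$).

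The remaining point requiring care — and the main technical obstacle — is controlling the error term uniformly so that the limit genuinely equals $g'(\psi(\pi))\langle\eta,\omega_m\rangle$ and not merely that plus an uncontrolled remainder. I would bound the second-order Taylor remainder by $\tfrac12\|g''\|_\infty\,\mathrm{diam}(I_{x_k})^2 = \tfrac12\|g''\|_\infty m^{-2k}$ on the whole interval $I_{x_k}$ (which contains all the $I_{(x_k,i)}$), so that each term $u_g(x_k,i)-u_g(x_k)$ differs from its first-order approximation by at most $C\|g''\|_\infty m^{-2k}$; multiplying by $m^k$ gives an error $O(m^{-k})\to 0$. One should also note that $\psi(x_k)\to\psi(\pi)$ as $k\to\infty$ (since $|\psi(x_k)-\psi(\pi)|\le m^{-k}$), which is what lets us expand around $\psi(\pi)$ rather than around $\psi(x_k)$; alternatively one expands around $\psi(x_k)$ and uses continuity of $g'$ at the end. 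Assembling these estimates yields the formula, and $\Lambda_{0,\eta}(g) = g'\langle\eta,\omega_m\rangle \in C^1([0,1])$ since $g\in C^2$, consistent with the stated codomain of the map.
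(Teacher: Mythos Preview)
Your proof is correct and follows the same overall strategy as the paper: use the explicit formula $u_g(x)=\fint_{I_x} g$, Taylor expand $g$ to second order on the shrinking interval $I_{x_k}$, identify the first-order coefficient, and bound the remainder by $O(m^{-2k})$.

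There is one mild organizational difference worth noting. The paper first invokes harmonicity to rewrite
\[
\langle \nabla u(x),\eta\rangle=\frac{1}{m}\sum_{j=0}^{m-1}\Big(m\eta_j-\sum_i\eta_i\Big)\,u(x,j),
\]
expands around the midpoint $t_x$ of $I_x$, and then computes the first-order term via the antisymmetry $\int_{I_{(x,j)}}(t-t_x)\,\dd t=-\int_{I_{(x,m-1-j)}}(t-t_x)\,\dd t$, which forces a case split on the parity of $m$. You instead keep each gradient component $u_g(x_k,i)-u_g(x_k)$ as a difference of averages and use the one-line fact that the average of $t\mapsto t-c$ over an interval is its midpoint minus $c$; the $\psi(\pi)$ and $\psi(x_k)$ contributions cancel in the subtraction and you read off $(\omega_m)_i=\tfrac{2i+1-m}{2m}$ directly, with no parity distinction. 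This is a modest but genuine simplification of the paper's argument; conversely, the paper's rewriting via harmonicity makes it transparent from the outset that the constant term in the Taylor expansion vanishes (since $\sum_j c_j(\eta)=0$), whereas in your version that cancellation happens componentwise in the difference of averages.
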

    
    We remark that 
    $
    \langle \omega_m; (1,...,1) \rangle =0.
    $
    This orthogonality is natural since from \eqref{lapla.usual} we have $\langle \nabla u (x) ; (1,...,1) \rangle =0$. 
    
    In the case of $\beta\neq0$, we need to add an extra assumption to obtain 
    the explicit expression for the Dirichlet-to-Neumann map.
    \begin{teo}\label{teo:carDN1}
        Let $0<\beta<\tfrac1{2},$ $g\in C^2([0,1]),$ 
        $\eta = (\eta_0,\dots,\eta_{m-1})\in\R^m$  be such that
        \[
         \langle \eta, (1,...,1)\rangle =   \sum_{j=0}^{m-1}\eta_j=0,
        \]
        and $u_g$ be the solution of the Dirichlet problem ($u_g$ is $\beta-$harmonic in $\T$) with 
        boundary datum $g.$ 
        Then, for any $\pi=(x_1, \dots, x_k,\dots)\in \partial\T,$ we have 
        \[
            \Lambda_{\beta,\eta}(g)(\psi(\pi))=\dfrac{(1-2\beta)}{m(1-\beta)}
               g'(\psi(\pi))\langle \eta,\varpi_m\rangle ,
        \]
        where $\varpi_m=(0,1,\dots,m-1).$ 
    \end{teo}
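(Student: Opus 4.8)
The plan is to compute $\langle\nabla u_g(x_k),\eta\rangle$ directly from the explicit formula \eqref{formula.harmonica2} and then pass to the limit, using the hypothesis $\sum_{j}\eta_j=0$ in a crucial way to discard every contribution that does not ``see'' the level $k+1$ of the tree. Write $x_k=(a_1,\dots,a_k)$, so that the successors are $(x_k,i)=(a_1,\dots,a_k,i)$ and their predecessors satisfy $(x_k,i)^{-j}=x_{k+1-j}$ for $j\ge1$. Plugging this into \eqref{formula.harmonica2}, one sees that in the expansion of $u_g(x_k,i)$ every term except the $j=0$ term, namely $(1-p)\fint_{I_{(x_k,i)}}g$, is independent of $i$; the same holds for $u_g(x_k)$. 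Since $\langle(1,\dots,1),\eta\rangle=0$, all these $i$-independent pieces are annihilated and we obtain the identity
\[
\langle\nabla u_g(x_k),\eta\rangle=\sum_{i=0}^{m-1}\eta_i\bigl(u_g(x_k,i)-u_g(x_k)\bigr)=(1-p)\sum_{i=0}^{m-1}\eta_i\fint_{I_{(x_k,i)}}g .
\]

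Next I would localize around $\psi(\pi)$. Setting $a_k:=\psi(x_k)$ and $\ell_k:=m^{-(k+1)}=|I_{(x_k,i)}|$, one has $I_{(x_k,i)}=[a_k+i\ell_k,\ a_k+(i+1)\ell_k]$, and a second-order Taylor expansion of $g\in C^2([0,1])$ about $a_k$ gives $\fint_{I_{(x_k,i)}}g=g(a_k)+\tfrac12(2i+1)\ell_k\,g'(a_k)+O(\ell_k^2)$, with the remainder bounded by a constant times $\|g''\|_\infty\ell_k^2$ uniformly in $i\in\{0,\dots,m-1\}$. Multiplying by $\eta_i$ and summing, and using $\sum_i\eta_i=0$ once more (to drop the $g(a_k)$ term and to replace $\sum_i(2i+1)\eta_i$ by $2\sum_i i\,\eta_i=2\langle\eta,\varpi_m\rangle$), I get
\[
\langle\nabla u_g(x_k),\eta\rangle=(1-p)\bigl(\ell_k\,g'(a_k)\,\langle\eta,\varpi_m\rangle+O(\ell_k^2)\bigr).
\]

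Finally I would multiply by $m^k$ and let $k\to\infty$: since $m^k\ell_k=1/m$, $m^k\ell_k^2=m^{-(k+2)}\to0$, $a_k\to\psi(\pi)$, and $g'$ is continuous, the limit exists and equals $\tfrac{1-p}{m}\,g'(\psi(\pi))\,\langle\eta,\varpi_m\rangle$; substituting $1-p=1-\tfrac{\beta}{1-\beta}=\tfrac{1-2\beta}{1-\beta}$ produces the stated constant $\tfrac{1-2\beta}{m(1-\beta)}$.

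The computation is otherwise routine. The step that requires the most care, and the only place where the assumption $\sum_j\eta_j=0$ is genuinely needed, is the first one: the bookkeeping of predecessors that shows the ``coarse-scale'' contributions (the $\fint_0^1 g$ term and the terms $\fint_{I_{x_{k-j}}}g$ with $j\ge1$) are exactly $i$-independent and hence killed by $\eta$. Without this hypothesis those terms would survive at order $O(1)$ rather than $O(\ell_k)$, and the scaled limit defining $\Lambda_{\beta,\eta}$ would diverge — which explains why the condition on $\eta$, absent in the $\beta=0$ case of Theorem \ref{teo:intro.carDN1.1}, is imposed here. A minor technical point is the uniform-in-$i$ control of the Taylor remainder, which is immediate from $g\in C^2$.
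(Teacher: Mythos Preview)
Your proof is correct and follows essentially the same route as the paper's: both reduce $\langle\nabla u_g(x_k),\eta\rangle$ to $(1-p)\sum_i\eta_i\fint_{I_{(x_k,i)}}g$ via $\sum_i\eta_i=0$, then Taylor expand and use $\sum_i\eta_i=0$ again to isolate the $g'$ term. The only cosmetic differences are that the paper obtains the key identity from the one-step recursion $u_g(x_k,i)=p\,u_g(x_k)+(1-p)\fint_{I_{(x_k,i)}}g$ rather than unpacking the full sum in \eqref{formula.harmonica2}, and expands $g$ about $\psi(\pi)$ instead of $\psi(x_k)$.
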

    
    \begin{re}
        If $\beta=0$ and $\displaystyle\sum_{j=0}^{m-1}\eta_j=0$ then
        \begin{align*}
            \Lambda_{0,\eta}(g)(\psi(\pi))&=g'(\psi(\pi))\langle \eta,\omega_m\rangle =g'(\psi(\pi))\sum_{i=0}^{m-1}\eta_i\dfrac{2i+1-m}{2m}\\
               &=g'(\psi(\pi))\sum_{i=0}^{m-1}\eta_i\left(i + \dfrac{1-m}{2m}\right) =g'(\psi(\pi))\sum_{i=0}^{m-1}\eta_i i\\
               &=g'(\psi(\pi))  \langle \eta,(0,1,\dots,m-1)\rangle.
        \end{align*}
    \end{re}
    
    Notice that with our first definition the Dirichlet-to-Neumann map $ \Lambda_{\beta,\eta}(g)$ is a local operator
    of order one ($ \Lambda_{\beta,\eta}(g)$ is just a constant times $g'$). However, in the Euclidean setting the Dirichlet-to-Neumann
    map is a nonlocal operator (also of order one).
    
    Now we present an alternative definition 
    of the Dirichlet-to-Neumann map. In this alternative definition we just consider the difference 
    between the values of $u$ at two successive vertices in a branch of the tree and then compute the limit
    as the vertices go to the boundary (suitable scaled).

    \begin{de}
         The Dirichlet-to-Neumann map associated to $\beta-$harmonic functions, that we call
         $\Gamma_{\beta}\colon C^2([0,1])\to L^\infty(0,1)$ 
         is defined by
         \[
            \Gamma_{\beta}(g)(\psi(\pi))\coloneqq
                \lim\limits_{k\to\infty} p^{-|x_k|}\left(u_g(x_{k+1})-u_g(x_k)\right),
         \]
         for any $\pi=(x_1,\dots,x_k,\dots)\in\partial\T.$ Here the scaling parameter $p$ is given by $p=\tfrac{\beta}{1-\beta}.$ 
    \end{de}
    
    With this definition, when $\tfrac{1}{m+1}<\beta<\tfrac1{2}$, the Dirichlet-to-Neumann map turns out to be a nonlocal operator
    (as in the Euclidean setting).
    
    \begin{teo}\label{teo:carDN2}
        Let $\tfrac{1}{m+1}<\beta<\tfrac1{2},$ $g\in C^2([0,1]),$ 
            and $u_g$ be the solution of the Dirichlet problem for $\beta-$harmonic functions with 
            boundary datum $g.$ 
            Then, for any $\pi=(x_1, \dots, x_k,\dots)\in \partial\T,$ we get 
            \[
               \Gamma_{\beta}(g)(\psi(\pi))=(1-p)\int_0^1\mathcal{K}(\pi,t)(g(\psi(\pi))-g(t))\dd t
            \]
            where the kernel $\mathcal{K}\colon \partial\T\times[0,1]\to \mathbb{R}$ is given by
            \[
                \mathcal{K}(\pi,t)\coloneqq         
                    1+m\dfrac{1-p}{m-p}\left(\left(\dfrac{m}{p}\right)^{\mathcal{N}(\psi(\pi),t)}-1\right)
                    \CChi_{I_{\pi,1}}(t)
            \]
            with $\mathcal{N}(\psi(\pi),\cdot)\colon I_{\pi,1}\to\mathbb{N}$ defined by
            \[
                \mathcal{N}(\psi(\pi),\cdot)\coloneqq\max\{k\in\mathbb{N}\colon t\in I_{\pi,k}\}. 
            \]
    \end{teo}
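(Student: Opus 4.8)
The plan is to read off from the explicit formula \eqref{formula.harmonica2} a one-step recursion for $u_g$ along a fixed branch $\pi=(x_1,\dots,x_k,\dots)$ and then track the exact rate at which two consecutive values approach $g(\psi(\pi))$. Since $|x_k|=k$ and $(x_{k+1})^{-j}=x_{k+1-j}$, reindexing the sum in \eqref{formula.harmonica2}, after a cancellation of the ``root'' terms $p^{k+1}\fint_0^1 g$, yields
\[
u_g(x_{k+1}) \;=\; p\,u_g(x_k) \;+\; (1-p)\fint_{I_{x_{k+1}}} g ,
\]
and hence
\[
u_g(x_{k+1})-u_g(x_k) \;=\; (1-p)\Bigl(\fint_{I_{x_{k+1}}} g \,-\, u_g(x_k)\Bigr).
\]
Thus the whole problem reduces to the asymptotics of $p^{-k}\bigl(\fint_{I_{x_{k+1}}} g - u_g(x_k)\bigr)$.

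Next I would use that, again by \eqref{formula.harmonica2} and because $\sum_{j=0}^{k-1}p^{j}(1-p)+p^{k}=1$, the value $u_g(x_k)$ is a convex combination of the nested averages $\fint_{I_{x_{k-j}}}g$, with the convention $x_0=\emptyset$, $I_{x_0}=[0,1]$. Subtracting this convex combination from $\fint_{I_{x_{k+1}}}g$ and writing each average as $\fint_{I}g=g(\psi(\pi))-\fint_{I}\bigl(g(\psi(\pi))-g(t)\bigr)\dd t$, the constant $g(\psi(\pi))$ cancels and one is left with
\[
\fint_{I_{x_{k+1}}} g - u_g(x_k) \;=\; \sum_{j=0}^{k} w_j \fint_{I_{x_{k-j}}}\!\bigl(g(\psi(\pi))-g(t)\bigr)\dd t \;-\; \fint_{I_{x_{k+1}}}\!\bigl(g(\psi(\pi))-g(t)\bigr)\dd t ,
\]
where $w_j=p^{j}(1-p)$ for $j<k$ and $w_k=p^{k}$. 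Multiplying by $p^{-k}(1-p)$ and setting $n=k-j$ turns the part of the first sum with $j<k$ into $(1-p)^2\sum_{n=1}^{k}(m/p)^{n}\int_{I_{\pi,n}}\bigl(g(\psi(\pi))-g(t)\bigr)\dd t$, the term $j=k$ into $(1-p)\int_0^1\bigl(g(\psi(\pi))-g(t)\bigr)\dd t$, and the last term into the error $-(1-p)\,p^{-k}\fint_{I_{x_{k+1}}}\bigl(g(\psi(\pi))-g(t)\bigr)\dd t$.

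The third step is to let $k\to\infty$, and this is precisely where the hypothesis $\tfrac1{m+1}<\beta$ enters, in the equivalent form $pm>1$ (indeed $p>1/m\iff\beta(m+1)>1$). Since $g\in C^2$ and $\psi(\pi)\in I_{\pi,n}$ with $|I_{\pi,n}|=m^{-n}$, one has $\bigl|\int_{I_{\pi,n}}\bigl(g(\psi(\pi))-g\bigr)\bigr|\le\|g'\|_\infty m^{-2n}$, so the general term of the series is $O\bigl((pm)^{-n}\bigr)$ — summable — while the error term is $O\bigl((pm)^{-k}\bigr)\to0$; the same bound lets one move the $k$-limit inside the summation and exchange $\sum$ with $\int$. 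This gives
\[
\Gamma_\beta(g)(\psi(\pi)) \;=\; (1-p)\!\int_0^1\!\bigl(g(\psi(\pi))-g(t)\bigr)\dd t \;+\; (1-p)^2\!\sum_{n=1}^{\infty}\Bigl(\tfrac{m}{p}\Bigr)^{n}\!\int_{I_{\pi,n}}\!\bigl(g(\psi(\pi))-g(t)\bigr)\dd t .
\]
Finally, writing $\int_{I_{\pi,n}}=\int_0^1\CChi_{I_{\pi,n}}$ and using that for $t\in I_{\pi,1}$ (the single point $t=\psi(\pi)$ being negligible) one has $\sum_{n\ge1}(m/p)^n\CChi_{I_{\pi,n}}(t)=\sum_{n=1}^{\mathcal{N}(\psi(\pi),t)}(m/p)^n=\tfrac{m}{m-p}\bigl((m/p)^{\mathcal{N}(\psi(\pi),t)}-1\bigr)$, and $0$ for $t\notin I_{\pi,1}$, everything collapses into the claimed expression $\Gamma_\beta(g)(\psi(\pi))=(1-p)\int_0^1\mathcal{K}(\pi,t)\bigl(g(\psi(\pi))-g(t)\bigr)\dd t$ with $\mathcal{K}$ the announced kernel.

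I expect the only genuine obstacles to be organizational rather than conceptual: carrying out the reindexing that produces the recursion, keeping careful track of the weights $w_j$, and rigorously justifying — via the uniform tail bound $O((pm)^{-n})$ supplied by $pm>1$ — both the interchange of the $k$-limit with the series and the exchange of $\sum$ with $\int$. The remaining identity is the elementary finite geometric sum $\sum_{n=1}^{N}(m/p)^n=\tfrac{m}{m-p}\bigl((m/p)^N-1\bigr)$.
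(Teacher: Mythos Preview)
Your argument is correct and complete. The recursion $u_g(x_{k+1})=p\,u_g(x_k)+(1-p)\fint_{I_{x_{k+1}}}g$ is indeed a direct consequence of \eqref{formula.harmonica2} (it is also recorded in the proof of Theorem~\ref{teo:formula-sol-DP-nolocal}), the cancellation of $g(\psi(\pi))$ via the convex-combination identity is clean, the tail estimate $\bigl|\int_{I_{\pi,n}}(g(\psi(\pi))-g)\bigr|\le\|g'\|_\infty m^{-2n}$ gives absolute convergence of the series since $pm>1$, and Fubini then justifies swapping $\sum$ and $\int$ exactly as you say.

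Your route differs from the paper's in its organization. The paper first proves a separate lemma giving a closed kernel representation
\[
u(x,j)-u(x)=-(1-p)\int_0^1\mathcal{K}^j_m(x,t)\,g(t)\,\dd t,
\]
with $\mathcal{K}^j_m$ defined piecewise according to whether $t\in I_{(x,j)}$, $t\in I_{x^{-(|x|-1)}}\setminus I_{(x,j)}$, or $t\notin I_{x^{-(|x|-1)}}$; the fact that $\int\mathcal{K}^j_m=0$ is then used to insert $g(\psi(\pi))$. After rescaling by $p^{-|x_k|}$, the paper passes to the limit by dominated convergence, using that $n(x_k,t)\to\mathcal{N}(\psi(\pi),t)$ pointwise and that $(m/p)^{n(x_k,t)}|g(\psi(\pi))-g(t)|$ is dominated by $\|g'\|_\infty\,|t-\psi(\pi)|^{\log_m p}$, which is integrable precisely when $-\log_m p<1$, i.e.\ $p>1/m$. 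You instead keep the nested averages as a series $\sum_{n}(m/p)^n\int_{I_{\pi,n}}(\cdots)$ and take the limit in the upper index via absolute convergence, then swap $\sum$ and $\int$ by Fubini, and only at the very end collapse the geometric sum into the kernel $\mathcal{K}(\pi,t)$. Both approaches hinge on $pm>1$; the paper expresses it as integrability of a power of $|t-\psi(\pi)|$, you as summability of $(pm)^{-n}$. Your version is somewhat more elementary (no intermediate kernel lemma, no explicit dominating function), while the paper's kernel lemma has independent value in making the finite-level structure explicit.
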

    
    The reason why we require the condition $\beta > \tfrac{1}{m + 1}$ is that we need that
$p = \tfrac{\beta}{1-\beta} > \tfrac{1}{m}$ in our arguments.
    
	In \cite{DPMR1,DPMR2,DPMR3} it was considered the Dirichlet problem on the directed tree for a general averaging operator. 
	The results for the Dirichlet problem presented in this paper, where the harmonic 
	equation at point $x$ depends on the predecesor of $x$ (except for $x=\emptyset$), can be easily adapted 
	for nonlinear averaging operators similar to those studied in \cite{DPMR1,DPMR2,DPMR3}. Unfortunately, in the 
	nonlinear case we can not find a general explicit formula for the Dirichlet-to-Neumann map 
	due to the fact that in the nonlinear case we do not have an explicit expression for the solution 
	of the Dirichlet problem like the one find for the lineal case.

    \noindent{\bf Organization of the paper.}  First, 
    in Section \ref{section:cp}, we prove a comparison principle;
    then, in Section \ref{DPS} we prove Theorems  
    \ref{teo.harmonicas2} and \ref{teo.harmonicas2.strongCP}; 
    and, finally, in Section \ref{section:DNlinealcase} we prove 
    Theorems \ref{teo:intro.carDN1.1}, \ref{teo:carDN1} and \ref{teo:carDN2}.
    
\section{A comparison principle}\label{section:cp}
    Let us start by introducing the definition of 
    $\beta-$subharmonic and  $\beta-$superharmonic functions. 
    
    Given $\beta\in[0,1],$  
    a function $u\colon\T\to\R$ is called a $\beta-$subharmonic function on $\T$ if the following 
    inequalities hold
    \begin{align*}
         u(\emptyset)&\le \frac1m\sum_{j=0}^{m-1}u(j),\\
         u(x)&\le\beta u(\hat{x})+\dfrac{1-\beta}{m}
        \sum_{i=0}^{m-1} u(x,i)\quad\forall x\in\T\setminus\{\emptyset\},
    \end{align*}
    and $u$ is a $\beta-$superharmonic 
    function if the opposite inequalities hold. Thus, if $u$ is  both $\beta-$subharmonic and 
    $\beta-$superharmonic, then $u$ is $\beta-$harmonic.
    
    Before showing our comparison principle we need to prove the following lemma (that gives the validity of
    a maximum/comparison principle).
    
    \begin{lem}\label{lem:cp}
        Let $u$ and $v$ be $\beta-$sub and $\beta-$superharmonic functions
        respectively, and\\
       $f,g\colon[0,1]\to[-\infty,\infty]$ be given by
        \[
            f(\psi(\pi))\coloneqq
            \sup \left\{
            \limsup_{x_k\to \pi}u(x)\colon \{x_k\}_{k\in\mathbb{N}}
            \subset\T,
             x_k\to \pi\right\}, 
        \]
        \[
            g(\psi(\pi))\coloneqq \inf \left\{
            \liminf_{x_k\to \pi}v(x)\colon \{x_k\}_{k\in\mathbb{N}}
            \subset\T, x_k\to \pi\right\}, 
        \]
       for any $\pi\in\partial \T.$ 
       If $g$ is finite at every point,then
        \[
            \sup\{u(x)-v(x)\colon x\in\T\}
            \le \sup\{f(t)-g(t)\colon t\in[0,1]\}.
        \]
    \end{lem}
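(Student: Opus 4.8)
The plan is to reduce the claim to a one-sided maximum principle for the difference $w:=u-v$ and then prove that principle by a ``greedy ascent'' along a branch, keeping control of the increments of $w$ as we descend. Concretely, if $\sup\{f(t)-g(t):t\in[0,1]\}=+\infty$ there is nothing to prove, so assume this supremum, call it $M$, is finite. A direct computation (subtract the corresponding defining inequalities) shows that $w=u-v$ is $\beta$-subharmonic. Moreover, for any branch $\pi$ and any sequence of vertices $x_k\to\pi$,
\[
\limsup_{k}w(x_k)\ \le\ \limsup_{k}u(x_k)-\liminf_{k}v(x_k)\ \le\ f(\psi(\pi))-g(\psi(\pi))\ \le\ M,
\]
all terms being finite because $g(\psi(\pi))$ is finite by hypothesis and hence $f(\psi(\pi))\le M+g(\psi(\pi))<\infty$. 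It therefore suffices to prove that $w(x)\le M$ for every $x\in\T$. Suppose this fails; then $S:=\sup_{\T}w>M$, and I fix a real number $N$ with $M<N<S$.

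The core step is to build a single branch $\pi^{\ast}=(z_1,z_2,\dots)$ along which $w$ stays above $N$; write $a_k:=w(z_k)$. Let $n_0:=\min\{\,|x|:w(x)>N\,\}$, which is a finite nonnegative integer since $\{w>N\}\neq\emptyset$. If $n_0\ge1$, choose $z_{n_0}$ with $w(z_{n_0})>N$ and let $\emptyset=z_0,z_1,\dots,z_{n_0}$ be the ancestral path to it; by minimality of $n_0$ we have $a_{n_0-1}=w(z_{n_0-1})\le N<a_{n_0}$, so the increment $a_{n_0}-a_{n_0-1}$ is strictly positive. If instead $n_0=0$, set $z_0=\emptyset$ and use $w(\emptyset)\le\frac1m\sum_j w(j)$ to choose $z_1\in\S(\emptyset)$ realizing $\max_j w(j)\ge w(\emptyset)=a_0>N$, so $a_1-a_0\ge0$. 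From there I continue greedily: given $z_k\neq\emptyset$ already chosen, $\beta$-subharmonicity at $z_k$ gives $\frac1m\sum_i w(z_k,i)\ge\frac{a_k-\beta a_{k-1}}{1-\beta}$, and I take $z_{k+1}\in\S(z_k)$ attaining $\max_i w(z_k,i)$, so $a_{k+1}\ge\frac{a_k-\beta a_{k-1}}{1-\beta}$. A short computation rewrites this as
\[
a_{k+1}-a_k\ \ge\ p\,(a_k-a_{k-1}),\qquad p:=\frac{\beta}{1-\beta}\ \ge\ 0 .
\]
Since $p\ge0$ and the first increment along the branch is $\ge0$ (indeed $>0$ when $n_0\ge1$), induction yields $a_{k+1}-a_k\ge0$ for all $k\ge n_0$; hence $(a_k)_{k\ge n_0}$ is non-decreasing, so $a_k\ge a_{n_0}>N$ for all $k\ge n_0$ (with $a_{n_0}$ read as $a_0$ when $n_0=0$), and therefore $\liminf_k w(z_k)\ge N$.

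To finish: the vertices $z_k$ converge to the boundary point $\pi^{\ast}$, so the displayed boundary estimate yields $\limsup_k w(z_k)\le M$, while the construction gives $\liminf_k w(z_k)\ge N$; this forces $N\le M$, contradicting $N>M$. Hence $S\le M$, i.e.\ $\sup_{\T}(u-v)\le\sup_{[0,1]}(f-g)$, as claimed. The step I expect to be the main obstacle is the branch construction, and more precisely the choice of its starting vertex: one must begin at a vertex whose increment from its ancestor is nonnegative — which is exactly what the ``minimal level'' choice of $n_0$ secures — since only then does the recursion $a_{k+1}-a_k\ge p(a_k-a_{k-1})$ propagate nonnegativity of the increments along the branch; if one instead started from an arbitrary vertex with $w>N$, the greedily chosen branch could have $w$ decreasing. (The argument uses $\beta<1$, so that $p$ is finite — this is the whole range relevant in what follows.)
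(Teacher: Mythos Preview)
Your proof is correct and follows essentially the same route as the paper's: both arguments observe that $w=u-v$ is $\beta$-subharmonic, pick a starting vertex whose value exceeds that of its ancestor (the paper does this via an ``almost-maximizer'' and a without-loss-of-generality reduction, you via the minimal level $n_0$), and then greedily descend so that the recursion $a_{k+1}-a_k\ge p(a_k-a_{k-1})$ keeps the increments nonnegative and forces $w$ to stay large along a branch. Your framing as a contradiction with a threshold $N\in(M,S)$ versus the paper's direct $\varepsilon$-argument is a cosmetic difference, and your explicit treatment of the starting vertex is in fact more careful than the paper's.
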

    
    \begin{proof}
        Let $$M= \sup\{u(x)-v(x)\colon x\in\T\}.$$ 
        
        If $M=\infty,$ then there is a sequence 
        $\{x_k\}_{k\in\mathbb{N}}\subset\T$ such that
        $x_k\to\pi\in \partial\T$ and
        \[
            \infty=\lim_{k\to\infty} u(x_k)-v(x_k)
            \le \limsup_{k\to\infty}u(x_k) -
            \liminf_{k\to\infty} v(x_k)\le f(\psi(\pi))-g(\psi(\pi))
        \]
        Therefore, 
        \[
        	\sup\{f(t)-g(t)\colon t\in[0,1]\}=\infty,
        \] 
        and there is nothing to prove.

        Throughout of the rest of this proof, we assume that
        $M<\infty.$
        For any $\varepsilon>0$ there is $x_0\in\T$ such that
        \[
            M-\varepsilon\le u(x_0)-v(x_0).
        \]
        If $x_0\neq\emptyset,$ without loss of generality, we can assume that
        $u(\hat{x}_0)-v(\hat{x}_0)<M-\varepsilon.$
        
        Observe that $u-v$ is a $\beta-$subharmonic function since 
        $u$ and $v$ are bounded $\beta-$sub and superharmonic functions, respectively.
        Then, the sequence $\{x_k\}_{k\in \N}$ defined by
        \[
            x_k\in \S(x_{k-1})\text{ such that } u(x_k)-v(x_k)=\max\{u(z)-v(z)\colon z\in\S(x_{k-1})\},
        \]
        satisfies 
        \[
            M-\varepsilon \le u(x_k)-v(x_k), \qquad \forall k\in\N.
        \]
        Additionally, there is $\pi\in\partial\T$ such that $x_k\to\pi.$ Then
        \[
            M-\varepsilon \le \limsup_{k\to\infty}u(x_k)-v(x_k)\le f(\psi(\pi))-g(\psi(\pi)).
        \]
        Therefore, for any $\varepsilon>0$
        \[
            M-\varepsilon\le \sup\{f(t)-g(t)\colon t\in[0,1]\}.
        \]
        Since $\varepsilon$ is arbitrary, the proof is complete.
    \end{proof}
    
    As an immediate corollary of the previous lemma, we have the  following the comparison principle.
    
    \begin{teo}\label{teo:cp}
         Let $u$ and $v$ be $\beta-$sub and $\beta-$superharmonic functions, respectively, and
         define
       $f,g\colon[0,1]\to[-\infty,\infty]$ by
        \[
            f(\psi(\pi))\coloneqq
            \sup \left\{
            \limsup_{x_k\to \pi}u(x)\colon \{x_k\}_{k\in\mathbb{N}}
            \subset\T,
             x_k\to \pi\right\} ,
        \]
        \[
            g(\psi(\pi))\coloneqq \inf \left\{
            \liminf_{x_k\to \pi}v(x)\colon \{x_k\}_{k\in\mathbb{N}}
            \subset\T, x_k\to \pi\right\}, 
        \]
       for any $\pi\in\partial \T.$ Assume that $f$ and $g$ are finite at each point.
        If $$f(t)\le g(t)$$ for all $t\in[0,1]$ then $$u(x)\le v(x)$$ for all $x\in\T.$
    \end{teo}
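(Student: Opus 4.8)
The plan is to obtain Theorem \ref{teo:cp} as a direct consequence of Lemma \ref{lem:cp}, since the boundary traces $f$ and $g$ appearing in the theorem are defined by exactly the same formulas as in the lemma. First I would note that the hypotheses of Lemma \ref{lem:cp} are satisfied: $u$ is $\beta$-subharmonic, $v$ is $\beta$-superharmonic, and $g$ is finite at every point (this is part of the assumptions of the theorem). Hence the lemma is applicable and gives
\[
    \sup\{u(x)-v(x)\colon x\in\T\}\le \sup\{f(t)-g(t)\colon t\in[0,1]\}.
\]

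Next I would invoke the standing assumption $f(t)\le g(t)$ for all $t\in[0,1]$, which shows that $f(t)-g(t)\le 0$ on $[0,1]$, so the right-hand side of the displayed inequality is nonpositive. Combining the two facts yields $\sup\{u(x)-v(x)\colon x\in\T\}\le 0$, that is, $u(x)\le v(x)$ for every $x\in\T$, which is the desired conclusion.

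There is no real obstacle here: the entire content has already been established in Lemma \ref{lem:cp}, and the only point worth making explicit is that the finiteness hypothesis on $g$ (and, harmlessly, on $f$) is precisely what licenses the application of the lemma. The statement is recorded separately only because this maximum-principle form, with the comparison of boundary data $f\le g$ implying the pointwise comparison $u\le v$, is the version that will be used repeatedly in the sequel (in particular for the uniqueness part of Theorem \ref{teo.harmonicas2}).
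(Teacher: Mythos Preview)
Your proof is correct and matches the paper's approach exactly: the paper simply records Theorem~\ref{teo:cp} as an immediate corollary of Lemma~\ref{lem:cp}, without even writing out the two-line deduction you provide.
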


\section{The Dirichlet Problem. Existence and uniqueness}\label{DPS}
    This section is devoted to the proofs of Theorems \ref{teo.harmonicas2} and \ref{teo.harmonicas2.strongCP}.
    
    \subsection{Existence and uniqueness}
    In the case $\beta=0$ the proof of Theorem \ref{teo.harmonicas2} can be found in any of the references
    \cite{DPMR1}, \cite{DPMR2} or \cite{s-tree}, for this reason throughout this section we assume that $\beta\neq0.$
    
    We first study the case in which $g$ is a characteristic function.
    \begin{lem}\label{lem:caracteristicas}
        Let $0<\beta<\tfrac12,$ $n\in\mathbb{N},$ $j\in\{0,1,\dots,m^n-1\}$
        and $I_{n,j}=[\tfrac{j}{m^n},\tfrac{j+1}{m^n}].$ There exists a
        $\beta-$harmonic function such that for any
        $\pi=(x_1, \dots, x_k, \dots)\in\partial\T$ hold
        \begin{equation}
            \label{eq:caracteristicas}
              \lim_{k\to\infty} u_{n,j}(x_k)
              =
              \begin{cases}
              \CChi_{I_{n,j}}(\psi(\pi))
                 & \text{if }
                \psi(\pi)\not\in\{\tfrac{j}{m^n},\tfrac{j+1}{m^n}\},\\
                1&\text{if }\psi(\pi)\in
                \{\tfrac{j}{m^n},\tfrac{j+1}{m^n}\}
                \text{ and }\exists k_0 /
                \psi(x_k)\in I_{n,j}\forall k\ge k_0,\\
                0&\text{if }\psi(\pi)\in
                \{\tfrac{j}{m^n},\tfrac{j+1}{m^n}\}
                \text{ and }\psi(x_k)\not \in I_{n,j}\forall k.
              \end{cases}
        \end{equation}
    \end{lem}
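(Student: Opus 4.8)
The plan is to write $u_{n,j}$ down explicitly and then verify, one by one, that it is bounded, $\beta$-harmonic, and has the boundary behaviour \eqref{eq:caracteristicas}. Let $y=y_{n}$ be the level-$n$ vertex with $I_{y_{n}}=I_{n,j}$ and $y_{0}=\emptyset,y_{1},\dots,y_{n}=y$ its chain of ancestors. Since $\fint_{I}\CChi_{I_{n,j}}=|I\cap I_{n,j}|/|I|$ for every interval $I$, I would set
\[
    u_{n,j}(x)\coloneqq p^{|x|}\,|I_{n,j}|+\sum_{i=0}^{|x|-1}p^{i}(1-p)\,\frac{|I_{x^{-i}}\cap I_{n,j}|}{|I_{x^{-i}}|},\qquad p=\tfrac{\beta}{1-\beta}\in(0,1).
\]
This is exactly the right-hand side of \eqref{formula.harmonica2} evaluated at $g=\CChi_{I_{n,j}}$, but at this point it is only an ansatz to be checked by hand; once the lemma is available, \eqref{formula.harmonica2} itself will follow from it by linearity and uniform approximation of a continuous $g$ by step functions. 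An equivalent, more combinatorial description (useful if one wants a self-contained construction) is that $u_{n,j}$ is the function of the form $A+Bp^{(\mathrm{relative\ level})}$ on the subtree rooted at $y$, of the form $Bp^{(\mathrm{relative\ level})}$ on each subtree hanging off one of $y_{0},\dots,y_{n-1}$, and whose values $c_{k}=u_{n,j}(y_{k})$ along the chain solve the $(n+1)\times(n+1)$ tridiagonal system coming from the $\beta$-harmonic equations at $y_{0},\dots,y_{n}$; that system is solved by forward substitution $c_{k}=\lambda_{k}c_{0}$ and then fixing $c_{0}$ from the last equation.

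\textbf{Boundedness and $\beta$-harmonicity.}
These I expect to be routine. For boundedness I would note that the weights $p^{|x|}$ and $p^{i}(1-p)$, $0\le i\le|x|-1$, are nonnegative and sum to $p^{|x|}+(1-p)\tfrac{1-p^{|x|}}{1-p}=1$, while every quotient lies in $[0,1]$, so $0\le u_{n,j}\le 1$. For the mean value identity at $x\neq\emptyset$ I would use $(x,\ell)^{-i}=x^{-(i-1)}$ for $i\ge1$, $|I_{(x,\ell)}|=|I_{x}|/m$, and $\sum_{\ell}|I_{(x,\ell)}\cap I_{n,j}|=|I_{x}\cap I_{n,j}|$ (the successor intervals tile $I_{x}$) to compute
\[
    \tfrac1m\sum_{\ell}u_{n,j}(x,\ell)=p^{|x|+1}|I_{n,j}|+(1-p)\tfrac{|I_{x}\cap I_{n,j}|}{|I_{x}|}+p\sum_{i=0}^{|x|-1}p^{i}(1-p)\tfrac{|I_{x^{-i}}\cap I_{n,j}|}{|I_{x^{-i}}|};
\]
expanding $u_{n,j}(\hat{x})$ the same way and using $(1-\beta)p=\beta$ and $(1-\beta)(1+p)=1$, the identity $\beta u_{n,j}(\hat{x})+\tfrac{1-\beta}{m}\sum_{\ell}u_{n,j}(x,\ell)=u_{n,j}(x)$ should fall out term by term; the same computation with $\beta=0$ gives $u_{n,j}(\emptyset)=|I_{n,j}|=\tfrac1m\sum_{\ell}u_{n,j}(\ell)$.

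\textbf{Boundary behaviour.}
This is the substantive step. I would fix $\pi=(x_{1},x_{2},\dots)\in\partial\T$, put $t=\psi(\pi)$ and $c_{k}\coloneqq|I_{x_{k}}\cap I_{n,j}|/|I_{x_{k}}|\in[0,1]$. Since $x_{k}^{-i}=x_{k-i}$, the definition rewrites as the geometric moving average $u_{n,j}(x_{k})=p^{k}|I_{n,j}|+(1-p)\sum_{\ell=1}^{k}p^{k-\ell}c_{\ell}$, with total weight $\to1$; an elementary Abel-type estimate then gives $\lim_{k}u_{n,j}(x_{k})=\lim_{k}c_{k}$ as soon as the latter exists. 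As $\{I_{x_{k}}\}$ is a nested sequence of intervals of length $m^{-k}$ containing $t$, I get: if $t\in(\tfrac{j}{m^{n}},\tfrac{j+1}{m^{n}})$ then $I_{x_{k}}\subset I_{n,j}$ and $c_{k}=1$ for large $k$; if $t\notin[\tfrac{j}{m^{n}},\tfrac{j+1}{m^{n}}]$ then $I_{x_{k}}\cap I_{n,j}=\emptyset$ and $c_{k}=0$ for large $k$ — this yields the first line of \eqref{eq:caracteristicas}. For the endpoints, using $\psi(x_{k})\nearrow t$ and computing the single-sided overlap, I find $c_{k}=1-\psi(\sigma^{k}\pi)$ at the left endpoint and $c_{k}=\psi(\sigma^{k}\pi)$ at the right one, where $\sigma(a_{1},a_{2},\dots)=(a_{2},a_{3},\dots)$. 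Such a $t$ is a rational with finite $m$-ary expansion, hence has exactly two $m$-ary expansions: the terminating one (along which $\psi(\sigma^{k}\pi)=0$ eventually) and the one with an all-$(m-1)$ tail (along which $\psi(\sigma^{k}\pi)=1$ eventually); matching which of the two realizes $\pi$ against whether $\psi(x_{k})$ eventually lies in $I_{n,j}$ gives the remaining two cases of \eqref{eq:caracteristicas}, with limits $1$ and $0$.

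\textbf{Expected main obstacle.}
Everything above apart from the last step is bookkeeping with a geometric series. The hard part will be the endpoint analysis: one must track, for a terminating $m$-ary rational $t=\psi(\pi)$, which tail of $\pi$ places the truncations $\psi(x_{k})$ to the left of, inside, or exactly on an endpoint of $I_{n,j}$, and then match this carefully with the three-way split in \eqref{eq:caracteristicas}. The monotonicity $\psi(x_{k})\nearrow t$ and the fact that $t$ admits at most two $m$-ary expansions make this a finite case check; it is also the only place where the precise convention for $I_{n,j}$ at its endpoints enters.
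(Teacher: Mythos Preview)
Your approach is correct and differs substantially from the paper's. The paper builds $u_{n,j}$ by an inductive, piecewise construction: for $n=1$ it sets $w_{1}(\emptyset)=1$, $w_{1}(x)=p^{|x|}$ off the distinguished first-level subtree, and $w_{1}(x)=b_{|x|}$ on it, where $\{b_{i}\}$ solves the second-order recursion $b_{i+1}=(b_{i}-\beta b_{i-1})/(1-\beta)$; one checks $\{b_{i}\}$ is increasing, bounded and Cauchy, hence convergent to some $b>0$, and then normalizes by $b$. For $n>1$ the same idea is grafted onto $w_{n-1}$ with a new recursive sequence $\{b_{n,i}\}$. You instead write down a closed formula---the specialization of \eqref{formula.harmonica2} to $g=\CChi_{I_{n,j}}$, treated as an ansatz---and verify everything directly: boundedness via a convex-combination argument, $\beta$-harmonicity via the telescoping you sketch (using $(1-\beta)(1+p)=1$ and $\beta=(1-\beta)p$), and the boundary behaviour through the key observation that for $k\ge n$ the ratio $c_{k}=|I_{x_{k}}\cap I_{n,j}|/|I_{x_{k}}|$ is already $0$ or $1$ according to whether $x_{n}$ equals the distinguished vertex $y_{n}$, after which a geometric Ces\`aro/Abel lemma gives $u_{n,j}(x_{k})\to\lim c_{k}$. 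Your route is shorter, needs no normalization (the limits are $0$ and $1$ on the nose), and anticipates Theorem~\ref{teo:formula-sol-DP-nolocal}; the paper's route has the advantage of not requiring one to guess the formula. One caveat for your endpoint case check: the branch reaching $(j+1)/m^{n}$ through the neighbouring vertex $y_{n,j+1}$ followed by all zeros has $\psi(x_{k})=(j+1)/m^{n}\in I_{n,j}$ for all $k\ge n$, yet your function (and the paper's) tends to $0$ along it. This is a minor imprecision in the lemma's phrasing of the second and third cases---the intended dichotomy is whether the branch passes through $y_{n}$---and not a defect in either argument; it does not affect how the lemma is used in Theorem~\ref{teo:existuni}.
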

    
    \begin{proof}
         We assume that $j=0$, the other cases can be handled in an analogous way. 
         We set $I_n=I_{n,0}.$
        
        If $n=0$ then $u\equiv 1$ is a  $\beta-$harmonic function that satisfies \eqref{eq:caracteristicas}.
        
        \medskip
        
        If $n=1,$ we define
        \[
            w_1(x)=
            \begin{cases}
                1 &\text{if } x=\emptyset,\\
                \left(\dfrac{\beta}{1-\beta}\right)^{|x|} &\text{if } x^{-(|x|-1)}\neq (\emptyset,0),\\               
                b_{|x|}&\text{if } x^{-(|x|-1)}= (\emptyset,0),
            \end{cases}
        \]
        where $\{b_i\}_{i\in\mathbb{N}}$ is the sequence given by
        \begin{align*}
            & b_1=m-(m-1)\dfrac{\beta}{1-\beta},\\
            & b_2=\dfrac{b_1-\beta}{1-\beta},\\
            & b_{i+1}=\dfrac{b_{i}-\beta b_{i-1}}{1-\beta}=b_i+\dfrac{\beta}{1-\beta}(b_i-b_{i-1}), \qquad\forall i>2.
        \end{align*}
        
        We can check by a direct calculation that $w$ is a $\beta-$harmonic function. Moreover,
        for any $\pi=(x_1, \dots, x_k, \dots)\in\partial\T$ such that $\psi(\pi)\not\in I_{1}$
        \[
             \lim_{k\to\infty} w_1(x_k)=   \lim_{k\to\infty} \left(\frac{\beta}{1-\beta}\right)^{k}=0=\CChi_{I_1}(\psi(\pi))
        \]
        due to the hypothesis $0<\beta<\tfrac{1}{2}.$ 
        In addition, there is 
        $\pi_0=(y_1,\dots,y_k,\dots)\in\partial\T$ 
        such that $\psi(\pi_0)=\tfrac1m,$ $y_1=1$ and 
         \[
             \lim_{k\to\infty} w_1(y_k)=   \lim_{k\to\infty} \left(\frac{\beta}{1-\beta}\right)^{k}=0.
        \]
        
        On the other hand, it is easy to check that $\{b_i\}_{i\in\mathbb{N}}$ is an
        increasing and bounded Cauchy sequence. Then, there exists $b>0$ such that $b_i\to b.$ Thus,
        for any $\pi=(x_1, \dots, x_k, \dots)\in\partial\T$ such that $\psi(\pi)\in I_{1}$ and $x_1=0$, we have
        \[
             \lim_{k\to\infty} w_1(x_k)=   \lim_{k\to\infty} b_{|x_k|}= \lim_{k\to\infty} b_{k}= b.
        \]
        
        Hence, taking $u_1\coloneqq\tfrac{w_1}{b}$, we have a bounded $\beta-$harmonic function such that \eqref{eq:caracteristicas}
        holds.
               
        Now, if $n>1$ and $z\in \T\!\!\!^n$ is such that $I_n=I_{z},$ we let
        \[
            w_n(x)\coloneqq
            \begin{cases}
                w_{n-1}(x) & \text{ if } |x|<n \text{ or } I_{x}\cap I_{\hat{z}}=\emptyset,\\
                w_{n-1}(\hat{z})\left(\dfrac{\beta}{1-\beta}\right)^{|x|-n+1} &
                \text{ if } |x|\ge n, I_{x}\cap I_{\hat{z}}\not=\emptyset \text{ and } 
                x^{-(|x|-n)}\not=z,\\
                b_{n,|x|-n+1}&
                \text{ if } |x|\ge n, x^{-(|x|-n)}=z,\\
            \end{cases}
        \]
        where $\{b_{n,i}\}_{j\in\mathbb{N}}$ is the sequence given by
        \begin{align*}
            & b_{n,1}=
            w_{n-1}(\hat{z})
            \left(\dfrac{m-(m-1)\beta}{1-\beta}\right)-m\dfrac{\beta}{1-\beta}w_{n-1}(z^{-2}),\\
            & b_{n,2}=\dfrac{b_{n,1}-\beta w(\hat{z})}{1-\beta},\\
            & b_{n,i+1}=\dfrac{b_{n,i}-\beta b_{n, i-1}}{1-\beta}=b_{n,i}+\dfrac{\beta}{1-\beta}(b_{n,i}-b_{n,i-1}), \qquad\forall i>2.
        \end{align*}
 
        By an inductive argument we obtain that the following statements hold: 
        \begin{itemize}
            \item The function $w_n$ is a $\beta-$harmonic function;
            \item For any $\pi=(x_1, \dots, x_k, \dots)\in\partial\T$ 
                such that $\psi(\pi)\not\in I_{n}$
                \[
                    \lim_{k\to\infty} w_n(x_k)= 0=\CChi_{I_n}(\psi(\pi))
                \]
                due to $0<\beta<\tfrac{1}{2};$ 
            \item There is $\pi_0=(y_1,\dots,y_k,\dots)\in\partial\T$ 
                such that $\psi(\pi_0)=\tfrac1{m^n},$ $y_n=(a_1,\dots,a_{n-1},1)$ with 
                $a_i=0$, $1\le i\le n-1$ and
                \[
                    \lim_{k\to\infty} w_1(y_k)=   \lim_{k\to\infty} \left(\frac{\beta}{1-\beta}\right)^{k}=0;
                \]
            \item  $\{b_{n,i}\}_{i\in\mathbb{N}}$ is an increasing and bounded Cauchy sequence with $b_{n,1}>0$. 
        \end{itemize}
        Then, there is $b>0$ such that $b_{n,i}\to b.$ 
        Thus, for any $\pi=(x_1, \dots, x_k, \dots)\in\partial\T$ such that $\psi(\pi)\in I_{m}$
        \[
             \lim_{k\to\infty} w_1(x_k)=   \lim_{k\to\infty} b_{n,k-n+1}=b.
        \]
        
        Finally, the function $u_n\coloneqq\tfrac{w_n}{b}$ is a bounded $\beta-$harmonic function such that \eqref{eq:caracteristicas}
        holds.
    \end{proof}
        
        Now, we can show existence and uniqueness for a general continuos boundary datum.
    
    \begin{teo}\label{teo:existuni} 
       For any $0<\beta<\tfrac12,$ 
       the Dirichlet problem for $\beta-$harmonic functions with continuous boundary datum 
       $g\colon [0,1]\to \R$, has a unique solution.
    \end{teo}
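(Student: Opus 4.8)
The plan is to establish uniqueness from the comparison principle of Theorem~\ref{teo:cp} and existence by approximating the continuous datum uniformly by step functions assembled from the solutions constructed in Lemma~\ref{lem:caracteristicas}. For uniqueness, let $u_{1}$ and $u_{2}$ be two bounded solutions sharing the continuous datum $\varphi$. Each $u_{i}$ is simultaneously $\beta$-subharmonic and $\beta$-superharmonic and converges to $\varphi(\psi(\pi))$ along every branch $\pi$; one first checks (using $\beta$-harmonicity together with boundedness) that every cluster value of $u_{i}$ at $\pi$ in fact equals $\varphi(\psi(\pi))$, so that the two boundary functions that $u_{i}$ determines in Theorem~\ref{teo:cp} both coincide with the finite function $\varphi$. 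Applying Theorem~\ref{teo:cp} to the pair $(u_{1},u_{2})$ then gives $u_{1}\le u_{2}$ on $\T$, and interchanging the roles of the two functions gives $u_{2}\le u_{1}$, whence $u_{1}\equiv u_{2}$.

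For existence, for each $n\in\N$ let $u_{n,j}$, $0\le j\le m^{n}-1$, be the bounded $\beta$-harmonic functions provided by Lemma~\ref{lem:caracteristicas} for the intervals $I_{n,j}=[j/m^{n},(j+1)/m^{n}]$, and set $v_{n}\coloneqq\sum_{j=0}^{m^{n}-1}\varphi(j/m^{n})\,u_{n,j}$. Being a finite linear combination, $v_{n}$ is bounded and $\beta$-harmonic; moreover $\sum_{j}u_{n,j}$ is a bounded $\beta$-harmonic function tending to $1$ along every branch, so the uniqueness just proved forces $\sum_{j}u_{n,j}\equiv1$, and hence $v_{n}$ is a convex combination of the values $\varphi(j/m^{n})$, with $\inf\varphi\le v_{n}\le\sup\varphi$. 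Reading off Lemma~\ref{lem:caracteristicas}, along any sequence of vertices converging to a branch $\pi$ the function $v_{n}$ converges to $\varphi(t)$ for some point $t$ with $|t-\psi(\pi)|\le m^{-n}$ (at a grid point $\psi(\pi)=j/m^{n}$ one gets $t=(j-1)/m^{n}$ or $t=j/m^{n}$ according as the approach is from the left or from the right); hence, writing $\varepsilon_{n}$ for the modulus of continuity of $\varphi$ at scale $m^{-n}$, the boundary functions that $v_{n}$ determines in Theorem~\ref{teo:cp} differ from $\varphi$ by at most $\varepsilon_{n}$, and $\varepsilon_{n}\to0$ by uniform continuity of $\varphi$.

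Consequently, for $n'>n$ the bounded $\beta$-harmonic function $w=v_{n}-v_{n'}$ has boundary functions bounded in modulus by $\varepsilon_{n}+\varepsilon_{n'}$, and comparing $w$ with the constants $\pm(\varepsilon_{n}+\varepsilon_{n'})$ through Theorem~\ref{teo:cp} yields $\sup_{\T}|v_{n}-v_{n'}|\le\varepsilon_{n}+\varepsilon_{n'}$; thus $(v_{n})_{n}$ is uniformly Cauchy on $\T$. Its uniform limit $u$ is bounded, and it is $\beta$-harmonic because each defining identity is a convex combination of values of the function and hence passes to uniform limits. To verify the boundary condition, given a branch $\pi$ and $\delta>0$ I would choose $n$ with $\varepsilon_{n}<\delta/3$ and $\sup_{\T}|u-v_{n}|<\delta/3$, and then, for $k$ large, estimate $|u(x_{k})-\varphi(\psi(\pi))|\le\sup_{\T}|u-v_{n}|+|v_{n}(x_{k})-\varphi(t)|+|\varphi(t)-\varphi(\psi(\pi))|<\delta$, with $t$ the point attached above to $\pi$. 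The step I expect to be the main obstacle is exactly the bookkeeping at the grid points $j/m^{n}$, where $\CChi_{I_{n,j}}$ is discontinuous and the limit in Lemma~\ref{lem:caracteristicas} depends on the mode of approach: one must make sure that the cluster values of the $v_{n}$ at each boundary point, taken along all sequences of vertices rather than only along branches, stay controlled by $\varepsilon_{n}$, which is precisely what licenses the application of Theorem~\ref{teo:cp}.
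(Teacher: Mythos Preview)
Your argument is correct, and it takes a genuinely different route from the paper. The paper proves existence by Perron's method: it defines $u^{*}(x)=\sup\{v(x):v\in\mathcal{A}\}$ over the class of bounded $\beta$-subsolutions with the right upper boundary behaviour, shows that $u^{*}$ is $\beta$-subharmonic, uses the functions $u_{n,j}$ of Lemma~\ref{lem:caracteristicas} only as \emph{barriers} to pin down $\lim_{k}u^{*}(x_{k})=g(\psi(\pi))$, and finally upgrades $u^{*}$ from subharmonic to harmonic by the usual bump-at-a-point contradiction. You, instead, use the $u_{n,j}$ as \emph{building blocks}, form the step-data solutions $v_{n}=\sum_{j}\varphi(j/m^{n})u_{n,j}$, and pass to a uniform limit via the comparison principle. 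Your route is more constructive and avoids the Perron supremum altogether; the paper's route is the classical potential-theoretic template and needs no Cauchy estimate. (The paper in fact gives a second, even shorter existence proof immediately afterwards, Theorem~\ref{teo:formula-sol-DP-nolocal}, by simply writing down the closed formula \eqref{sol-no-local} and checking it works.)

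On the obstacle you single out: it dissolves once you inspect the proof of Lemma~\ref{lem:cp}. There the sequence produced satisfies $x_{k}\in\S(x_{k-1})$, so it \emph{is} a branch; consequently the conclusion of Lemma~\ref{lem:cp} (and hence of Theorem~\ref{teo:cp}) remains valid if $f$ and $g$ are computed only along branches. That is exactly what your estimates control, so the grid-point bookkeeping causes no trouble. Alternatively, by the explicit construction in Lemma~\ref{lem:caracteristicas}, the value $u_{n,j}(x)$ for $|x|\ge n$ depends only on which level-$n$ ancestor $x$ has, so any sequence $x_{k}\to\pi$ eventually lies under the single vertex $\pi_{n}$ and the limit agrees with the branch limit; either observation closes the point you flagged.
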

    \begin{proof} 
        Let $g\colon [0,1]\to \R$ be
        a continuous boundary datum.
        
        Let us start by observing that, for any constant $c$, a function 
        $u$ is a solution of the 
        Dirichlet problem for $\beta-$harmonic 
        functions with datum $g$,
        if only if $u+c$ is a solution of the 
        Dirichlet problem for $\beta-$harmonic 
        functions with datum $g+c$. 
        Therefore, without loss of generality, we may 
        assume that $g$ is a nonnegative function. 
        
        Let $\mathcal{A}$ be the set 
        \begin{equation}\label{teo:existuni:setA}
            \mathcal{A} \coloneqq 
            \left\{
                \begin{aligned}
                    v \colon\T\to\mathbb{R} \colon 
                    &v\text{ is a bounded } \beta-\text{subharmonic function},\\
                    &\limsup_{k\to \infty}v(x_k)\le g(\psi(\pi)) \quad \forall \pi=(x_1,\dots,x_k,\dots)
                    \in\partial\T
                \end{aligned}
           \right\},
        \end{equation} 
        Note that $\mathcal{A}$ is not the empty set 
        since $v_0(x) \coloneqq \min\{g(t)\colon t\in[0,1]\}\in\mathcal{A}.$
        
        We claim that the function 
        \[
            u^*\colon\T\to\mathbb{R}, \quad u^*(x) \coloneqq \sup\{ v(x)\colon v\in \mathcal{A}\} 
        \] 
        is a $\beta-$subharmonic function. Indeed, for any  $v\in\mathcal{A}$ we have 
        \[
            v(\emptyset)\le\dfrac1m\sum_{j=0}^{m-1}v(\emptyset,j) \le\dfrac1m\sum_{j=0}^{m-1}u^*(\emptyset,j),
        \]
        \[
            v(x) \leq \beta v(\hat{x}) +  \frac{1-\beta}{m}\sum_{j=0}^{m-1} v(x,j) 
            \leq \beta u^*(\hat{x}) +  \frac{1-\beta}{m}\sum_{j=0}^{m-1} u^*(x,j),
            \qquad \forall x\in\T\setminus\{\emptyset\}.
        \] 
        Therefore, using the definition of $u^*$ as the supremum of the functions $v$, we get
        \[
            u^*(\emptyset)\le\dfrac1m\sum_{j=0}^{m-1}u^*(j),
        \]
        \[
            u^*(x)\le  \beta u^*(\hat{x}) +  \frac{1-\beta}{m}\sum_{j=0}^{m-1} u^*(x,j)\qquad \forall x\in\T\setminus\{\emptyset\},
        \]
        that is, $u^*$ is a $\beta-$subharmonic function. Now, we extend $u^*$ to the boundary by its $\limsup$.
        
        Our second claim is that for any 
        $\pi=(x_1,\dots,x_k,\dots)\in\partial\T$
        \[
            \lim_{k\to\infty}u^*(x_k)= g(\psi(\pi)).
        \]
        
        Let $\pi_0=(y_1,\dots,y_k,\dots)\in\partial\T.$ 
        For any $n\in\T,$
        there exist $j\in\{0,\dots,m-1\}$ and 
        $k_0$ such that
        $\psi(y_k)\in I_{n,j}=[\tfrac{j}{m^n},\tfrac{j+1}{m^n}]$
        for all $k\ge k_0.$
        Now taking $c=\min\{g(t)\colon t\in I_{n,j}\}$
        and $w_{n,j}=cu_{n,j}$ where $u_{n,j}$ is given by
        Lemma \ref{lem:caracteristicas}, we have that
        $w_{n,j}$ is a $\beta-$harmonic function such that
        \[
            \lim_{k\to\infty}w_{n,j}(x_k)\le g(\psi(\pi)),
            \qquad \forall\pi=(x_1,\dots,x_k,\dots)
                    \in\partial\T.
        \]
        Here, we are using that $g\ge 0$ in [0,1]. 
        Then, $w_{n,j}\in\mathcal{A},$ and 
        therefore $w_{n,j}(x)\le u^*(x)$ for any $x\in\T.$
        In particular, $w_{n,j}(y_k)\le u^*(y_k)$
        for any $k.$ Therefore,
        \[
           \min\{g(t)\colon t\in I_{n,j}\} =\lim w_{n,j}(y_k)\le \liminf_{k\to\infty}u^*(y_k).
        \]
        Taking the limit as $n\to \infty,$ 
        we have
        \[
           g(\psi(\pi_0))\le \liminf_{k\to\infty}u^*(y_k)
        \]
        since $g$ is a continuous function.
        
        On the other hand, taking
        $w^{n,j}=a(1-u_{n,j})+bu_{n,j}$ where 
        $a=2\max\{g(t)\colon t\in[0,1]\}$ and
        $b=\max\{g(t)\colon t\in I_{n,j}$ we obtain a $\beta-$harmonic function with 
        \[
            g(\psi(\pi))\le  \lim_{k\to\infty}w^{n,j}(x_k) ,
            \qquad \forall\pi=(x_1,\dots,x_k,\dots)
                    \in\partial\T.
        \]
        Thus, by Theorem \ref{teo:cp}, we have that 
        $v(x)\le w^{n,j}(x)$ for any $x\in\T.$ Therefore
        $u^*(x)\le w^{n,j}(x)$ for any $x\in\T.$
        In particular, $u^*(y_k)\le w^{n,j}(y_k) $
        for any $k.$ Then
        \[
            \limsup_{k\to\infty}u^*(y_k)\le
            \lim w_{n,j}(y_k)
            =\max\{g(t)\colon t\in I_{n,j}\}.
        \]
        Again, taking the limit as $n\to \infty,$ 
        we have
        \[
            \limsup_{k\to\infty}u^*(y_k)\le g(\psi(\pi_0)).
        \]
        Therefore
        \[
            \lim_{k\to\infty}u^*(y_k)= g(\psi(\pi_0)).
        \]
        
        Our last claim is that $u^*$ is a $\beta-$harmonic function. To verify this claim, we suppose, 
        arguing by contradiction, that $u^*$ is not a $\beta-$harmonic function. Then,
        there is $x_0\in \T$ such that 
        \[
            u^*(x_0)<
            \begin{cases}
                \dfrac1m\displaystyle\sum_{j=0}^{m-1} u^*(\emptyset,j) & 
                \text{if } x_0=\emptyset,\\
                \beta u^*(\hat{x}_0) + \displaystyle \frac{1-\beta}{m}\sum_{j=0}^{m-1} u^*(x_0,j) & 
                \text{if } x_0\neq\emptyset.
            \end{cases}
        \]
        Therefore, there exists $\delta>0$ such that for this $x_0\in \T$,
        \[
            u^*(x_0)+\delta\le  
            \begin{cases}
                \dfrac1m\displaystyle\sum_{j=0}^{m-1} u^*(\emptyset,j) & 
                \text{if } x_0=\emptyset,\\
                \beta u^*(\hat{x}_0) +  \displaystyle\frac{1-\beta}{m}\sum_{j=0}^{m-1} u^*(x_0,j) & 
                \text{if } x_0\neq\emptyset.
            \end{cases}
        \]
        Then, the function $v\colon\T\to\mathbb{R}$ defined by
        \[
            v(x)\coloneqq
                \begin{cases}
                    u^*(x) &\text{if }x\in\T\setminus\{x_0\},\\
                    u^*(x_0)+\delta &\text{if }x=x_0,\\
                \end{cases}
        \]
        belongs to $\mathcal{A}.$ Therefore, by the definition of $u^*,$ we have $v(x)\le u^*(x)$
        for any $x\in\T.$ In particular
        \[
            u^*(x_0)<u^*(x_0)+\delta=v(x_0)\le u^*(x_0).
        \]
        This contradiction establishes our last claim.
        
        Therefore,  from our last two claims, we have that $u^*$ is a solution to the $\beta-$Dirichlet problem with continuous 
        boundary datum $g.$
        
        Finally, we want to observe that the uniqueness of solution is, once more, a consequence of Theorem \ref{teo:cp}.
    \end{proof}

    We now give a different proof of the existence of solution finding an explicit formula for the solution. This explicit formula will be very
    useful when computing the Dirichlet-to-Neumann map.
    
    \begin{teo} \label{teo:formula-sol-DP-nolocal} 
    	Let $0<\beta<\tfrac12.$ 
        If $g$ is a continuous function the solution to the 
        $\beta-$Dirichlet problem with boundary datum $g$ is 
        given by the formula: 
        \begin{equation} \label{sol-no-local} 
            u(x)= p^{|x|} \fint_0^1 g(t) \dd t 
            + \sum_{j=0}^{|x|-1} p^j(1-p) 
            \fint_{I_{x^{-j}} } g(t) \dd t,
        \end{equation} 
        where $p = \tfrac{\beta}{1-\beta}\in(0,1)$.
    \end{teo}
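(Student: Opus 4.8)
The plan is to verify directly that the right-hand side of \eqref{sol-no-local} defines a bounded $\beta$-harmonic function attaining the boundary datum $g$, and then to appeal to uniqueness (Theorem~\ref{teo:cp}, or equivalently Theorem~\ref{teo:existuni}) to conclude that it is \emph{the} solution. Throughout, I would write $u$ for the function defined by \eqref{sol-no-local}, keep $p=\tfrac{\beta}{1-\beta}\in(0,1)$, and record the elementary identities $1-\beta=\tfrac1{1+p}$ and $\beta=\tfrac{p}{1+p}$, which are the only facts about $\beta$ that the argument uses.

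The computational heart of the argument is a single one-step identity,
\[
u(x)=p\,u(\hat x)+(1-p)\fint_{I_x}g(t)\,\dd t ,\qquad \forall x\in\T\setminus\{\emptyset\}.
\]
To obtain it, I would write \eqref{sol-no-local} for $\hat x$ (which has level $|x|-1$), use that the level-$(|x|-1-j)$ ancestor of $\hat x$ equals $x^{-(j+1)}$, and shift the summation index: the geometric factor $p$ comes out and the $j=0$ term $(1-p)\fint_{I_x}g$ is produced by the reindexing (the case $|x|=1$, where the sum for $\hat x=\emptyset$ is empty, checks directly). Next, the $m$ intervals $I_{(x,i)}$, $i=0,\dots,m-1$, partition $I_x$ into pieces of equal length $m^{-(|x|+1)}$, so $\tfrac1m\sum_{i=0}^{m-1}\fint_{I_{(x,i)}}g=\fint_{I_x}g$; averaging the one-step identity over the successors of $x$ therefore gives
\[
\frac1m\sum_{i=0}^{m-1}u(x,i)=p\,u(x)+(1-p)\fint_{I_x}g(t)\,\dd t ,\qquad\forall x\in\T .
\]

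With these two displays the $\beta$-harmonicity of $u$ is forced. For $x\neq\emptyset$, substituting $(1-p)\fint_{I_x}g=u(x)-p\,u(\hat x)$ from the one-step identity into the averaged display yields $\tfrac1m\sum_i u(x,i)=(1+p)u(x)-p\,u(\hat x)$, and multiplying by $1-\beta=\tfrac1{1+p}$ turns this into exactly $u(x)=\beta u(\hat x)+\tfrac{1-\beta}{m}\sum_i u(x,i)$. For $x=\emptyset$ the averaged display reads $\tfrac1m\sum_i u(i)=p\,u(\emptyset)+(1-p)\fint_0^1 g=\fint_0^1 g=u(\emptyset)$, using $u(\emptyset)=\fint_0^1 g$ from \eqref{sol-no-local}. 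Boundedness is immediate: the coefficients in \eqref{sol-no-local} are nonnegative with $p^{|x|}+(1-p)\sum_{j=0}^{|x|-1}p^j=1$, so $\|u\|_\infty\le\|g\|_\infty$. It then remains to check \eqref{BDP}. Fixing a branch $\pi=(x_1,\dots,x_k,\dots)$, along it $\hat x_k=x_{k-1}$, so the one-step identity becomes the scalar recursion $u(x_k)=p\,u(x_{k-1})+(1-p)\fint_{I_{x_k}}g$. Setting $e_k\coloneqq u(x_k)-g(\psi(\pi))$ and $\delta_k\coloneqq\fint_{I_{x_k}}g-g(\psi(\pi))$, continuity of $g$ together with $\psi(\pi)\in I_{x_k}$ and $|I_{x_k}|=m^{-k}\to0$ gives $\delta_k\to0$, while $e_k=p\,e_{k-1}+(1-p)\delta_k$ yields $|e_k|\le p^{k-1}|e_1|+(1-p)\sum_{j=2}^{k}p^{k-j}|\delta_j|$; splitting the sum at an index $N$ past which $|\delta_j|\le\varepsilon$ and using $(1-p)\sum_j p^{k-j}\le1$ gives $\limsup_k|e_k|\le\varepsilon$, hence $u(x_k)\to g(\psi(\pi))$. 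Thus $u$ is a bounded $\beta$-harmonic function solving the $\beta$-Dirichlet problem with datum $g$, and by the uniqueness furnished by Theorem~\ref{teo:cp} it is the unique such solution, which proves \eqref{sol-no-local}.

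The only delicate point is the bookkeeping behind the one-step identity — tracking the ancestor labels $x^{-j}$ through the sum in \eqref{sol-no-local}, reindexing the geometric series correctly, and treating the root separately — after which $\beta$-harmonicity drops out purely from $1-\beta=\tfrac1{1+p}$ and the boundary limit is a routine Cesàro-type estimate on a weighted average with geometrically decaying weights.
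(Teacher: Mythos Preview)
Your proof is correct and follows essentially the same route as the paper's: both establish the one-step recursion $u(x)=p\,u(\hat x)+(1-p)\fint_{I_x}g$ from \eqref{sol-no-local}, use it together with the partition $\fint_{I_x}g=\tfrac1m\sum_i\fint_{I_{(x,i)}}g$ to verify $\beta$-harmonicity, and then check the boundary limit by an $\varepsilon$-splitting argument exploiting the geometric weights. The only cosmetic differences are that you package the harmonicity check via the averaged display $\tfrac1m\sum_i u(x,i)=p\,u(x)+(1-p)\fint_{I_x}g$ and the identity $1-\beta=\tfrac1{1+p}$, whereas the paper substitutes the recursion directly into $\beta u(\hat x)+\tfrac{1-\beta}{m}\sum_j u(x,j)$; and for the boundary limit you iterate the scalar recursion $e_k=p\,e_{k-1}+(1-p)\delta_k$ rather than bounding $|u(x)-g(\psi(\pi))|$ straight from the formula~\eqref{sol-no-local}. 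Your explicit remarks on boundedness ($\|u\|_\infty\le\|g\|_\infty$) and the appeal to uniqueness are useful additions that the paper leaves implicit.
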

    
    \begin{proof}
        We first observe that
        \begin{align*}
            u(\emptyset)&=\fint_0^1 g(t) \dd t
            =\frac{1}m\sum_{j=0}^{m-1}p\fint_0^1 g(t) \dd t+(1-p)\fint_0^1 g(t) \dd t\\
            &=\frac{1}m\sum_{j=0}^{m-1}\left[p\fint_0^1 g(t) \dd t+(1-p)\fint_{I_{j}} g(t) \dd t \right]
            =\frac{1}m\sum_{j=0}^{m-1}u(\emptyset,j).
        \end{align*}
        
        On the other hand, for any $x\neq \emptyset$ 
        \[
            u(x) = pu(\hat{x}) + (1-p)\fint_{I_x} g(t) \dd t.
        \] 
        Then
        \begin{align*} 
            \beta u(\hat{x}) + \frac{1-\beta}{m} \sum\limits_{j=0}^{m-1} u(x,j) 
            & =  \frac{\beta}{p} 
            \left[
                u(x) - (1-p){\fint_{I_{x}}} g(t) \dd t
            \right] \\
            & \qquad + \displaystyle\dfrac{1-\beta}{m} \sum\limits_{j=0}^{m-1} 
            \left[
                p u(x)+ (1-p)\fint_{I_{(x,j)}} g(t) \dd t
            \right]\\
            & =  u(x) - (1-p)(1-\beta) {\fint_{I_{x}}}g(t) \dd t \\
           & \qquad + (1-p)(1-\beta) {\fint_{I_{x}}}g(t) \dd t\\
            & =  u(x).
        \end{align*} 
        Therefore, $u$ is a $\beta-$harmonic function.

        To end the proof, we need to show that 
        \[
            \lim\limits_{k\to+\infty} u(x_k)= 
            g(\psi(\pi)) \quad \forall \pi=(x_1, \dots, x_k, \dots)\in\partial\T.
        \] 
        Fix $\varepsilon>0$ and $\pi\in\partial\T$. By continuity there exists $\delta>0$ such that 
        \[
            |g(t)-g(\psi(\pi))|<\varepsilon\quad\text{if }|t-\psi(\pi)|<\delta.
        \]
        Now, we choose $k_0$ such that 
        \[ 
            \sum_{j>k_0+1} p^j(1-p)<\varepsilon 
        \]
        which is possible since $0<p<1$. 
        Then, we pick $x\in \T$ such that 
        \[
            I_x \subset I_{x^{-1}} \subset \dots \subset I_{x^{-k_0}} 
            \subset (\psi(\pi)-\delta,\psi(\pi)+\delta) \quad  
            \text{ and } \quad p^{|x|}<\varepsilon.
        \] 
        Using both facts and the formula \eqref{sol-no-local}, it is easy to compute that
        \begin{align*}
            |u(x) &- g(z)| 
            = \left|
                \sum_{j=0}^{|x|-1} p^j(1-p) {\fint_{I_{x^{-j}}} } 
                    \left[
                        g(t)-g(\psi(\pi))
                    \right] \dd t  
                        + 
                        p^{|x|} \fint_0^1 
                            \left[
                                g(t)-g(\psi(\pi))
                            \right] \dd t 
                \right| \\
            &\leq 
                    \sum_{j=0}^{k_0} p^j(1-p) 
                    {\fint_{I_{x^{-j}}} } |g(t)-g(\psi(\pi))| \dd t 
                    + \displaystyle 
                    \sum_{j=k_0+1}^{|x|-1} 2p^j(1-p) \|g\|_{\infty}  
                    +  2p^{|x|}\|g\|_{\infty} \\
            &\leq \varepsilon(1+4\|g\|_{\infty}).
        \end{align*} 
        Therefore, as $\varepsilon$ is arbitrary it follows that 
        \[
            \lim_{x\to \pi } 
        u(x)=g(\psi(\pi)).
            \qedhere
        \]
    \end{proof}
    
    \begin{re} \label{rem.martingala}
    {\rm The function $u$ is indeed a martingale relative to the natural filtration determined in $\T_m$ 
    as the proof of the explicit formula \eqref{sol-no-local} shows. }
    \end{re}
    
    \begin{re} \label{rem.g.masgemeral}
        {\rm The formula \eqref{sol-no-local} makes sense for a bounded and integrable function $g$. 
        In fact, the function defined in \eqref{sol-no-local} is $\beta-$harmonic, so, the proof of Theorem \ref{teo:formula-sol-DP-nolocal} holds 
        with the clarification that in the boundary $u$ coincides with $g$ in any Lebesgue point of $g$.
        The proof of the comparison principle holds also in this case, so, the Dirichlet problem 
        with a bounded and integrable boundary condition has also a unique solution. }
    \end{re}
    
    In what follows we will analyze the case $\beta\in[\tfrac12,1]$. 
    For $\beta=1$ it is immediate that the only $1-$harmonic functions are the constants
    (indeed, by an inductive argument in the distance from $x$ to the root of the tree one can show that $u(x) =u(\emptyset)$ for every $x\in\T$). 
    For $\beta\in[\tfrac12,1)$ we will show that any $\beta-$harmonic 
    function on $\T,$ that is not constant is unbounded. 
    In both cases, the $\beta-$Dirichlet problem with a general (non-constant) continuous boundary condition has not solution.

    \begin{teo} \label{teo:nonexist}
        Let $\beta\in[\tfrac12,1)$ and $u$ a non-constant $\beta-$harmonic function. Then, $u$ is unbounded.
    \end{teo}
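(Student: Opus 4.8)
The plan is to pass from $u$ itself to its one-step increments along edges and show that these satisfy their own mean value identity, this time with a multiplicative factor that is $\ge 1$ precisely when $\beta\ge\tfrac12$; such a factor forces $u$ to grow without bound along a suitably chosen branch. Concretely, set $p=\tfrac{\beta}{1-\beta}$, so that $p\ge 1$ for $\beta\in[\tfrac12,1)$. For $x\in\T\setminus\{\emptyset\}$ the equation $\tfrac{1-\beta}{m}\sum_{i=0}^{m-1}u(x,i)=u(x)-\beta u(\hat x)$, divided by $1-\beta$ and using $\tfrac{1}{1-\beta}=1+p$, becomes
\[
\frac1m\sum_{i=0}^{m-1}u(x,i)=u(x)+p\bigl(u(x)-u(\hat x)\bigr).
\]
Writing $d(y):=u(y)-u(\hat y)$ for $y\neq\emptyset$, this reads
\[
\frac1m\sum_{i=0}^{m-1}d(x,i)=p\,d(x),\qquad\forall\,x\in\T\setminus\{\emptyset\}.
\]

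Next I would note that, since $u$ is non-constant, there exists $x_0\in\T\setminus\{\emptyset\}$ with $d(x_0)\neq 0$ (otherwise an induction on $|x|$ from the root would give $u\equiv u(\emptyset)$), and since $-u$ is also $\beta$-harmonic we may assume $d(x_0)>0$. From the displayed increment identity, $\tfrac1m\sum_i d(x_0,i)=p\,d(x_0)\ge d(x_0)$, so some successor $x_1\in\S(x_0)$ has $d(x_1)\ge p\,d(x_0)>0$. Repeating this greedy "follow the successor of maximal increment" construction produces a branch $x_0,x_1,x_2,\dots$ with $d(x_{k})\ge p\,d(x_{k-1})$, hence $d(x_k)\ge p^{k}d(x_0)$ for all $k$. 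Telescoping along the branch (recall $\widehat{x_j}=x_{j-1}$) gives
\[
u(x_k)=u(x_0)+\sum_{j=1}^{k}d(x_j)\ \ge\ u(x_0)+d(x_0)\sum_{j=1}^{k}p^{\,j}\xrightarrow[k\to\infty]{}+\infty,
\]
because $p\ge 1$: the growth is linear if $p=1$ (i.e. $\beta=\tfrac12$) and geometric if $p>1$. Therefore $u$ is unbounded.

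I do not expect a real obstacle here; the two points that need a line of care are the reduction to the existence of an edge across which $u$ changes (handled by the induction mentioned above) and the branch selection, which is the same greedy argument used in the proof of the comparison principle. I would present the cases $\beta=\tfrac12$ and $\beta\in(\tfrac12,1)$ together via the single estimate displayed above, rather than splitting them.
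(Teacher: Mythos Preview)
Your proof is correct and follows essentially the same route as the paper's: both derive the increment identity $\tfrac1m\sum_i\bigl(u(x,i)-u(x)\bigr)=p\bigl(u(x)-u(\hat x)\bigr)$ with $p=\tfrac{\beta}{1-\beta}\ge 1$, greedily build a branch along which the increments satisfy $d(x_{k})\ge p\,d(x_{k-1})$, and telescope to obtain unboundedness. Your packaging via the function $d(y)=u(y)-u(\hat y)$ is marginally tidier than the paper's bookkeeping with the quantities $a_n=u(x_n)-u(x_0)$, but the argument is the same.
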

    \begin{proof}
        We assume without loss of generality that $u(\emptyset)=1$. 
        Let $y$ be a point of $\T$ such that $u(y)\neq 1$ and $|y|=\min\{|x|\colon u(x)\neq 1\}.$  
        Such point exists since $u$ is non-constant and if there is more than one, choose any of them. 
        Denote $x_0$ for its father and observe that $u(x_0)=1$. 
        By the mean-value formula there is a successor of $x_0$, called $x_1$, such that $u(x_1)=1 + a_1$ 
        with $a_1>0$. Now, using again the mean-value formula for $x_1$ we have  
        \begin{equation}
            \frac{1}{m} \sum_{j=0}^{m-1} u(x_1,j) = 1 + \frac{a_1}{1-\beta}>1 +a_1.
        \end{equation} 
        So, there is a descendent of $x_1$, called $x_2$, such that $u(x_2)=1 + a_2$ with $a_2>a_1$. 
        Suppose that there are $x_1, \dots, x_n\in \T$, such that $x_i$ is the father of 
        $x_{i+1}$ and $u(x_i) = 1+a_i$ with $a_i>a_{i-1}$, 
        we want to construct the next point in the sequence. 
        By the mean-value formula 
        \begin{equation}\label{eq-beta-mayor-a-1/2}
            \frac{1}{m} \sum_{j=0}^{m-1} u(x_n,j) = 1 + a_n + \frac{\beta}{1-\beta}(a_n - a_{n-1})> 1 +a_n, 
        \end{equation} 
        so, there exists $x_{n+1}$ such that $\hat{x}_{n+1}=x_n$ and $u(x_{n+1})=1+a_{n+1}$ with $a_{n+1}>a_n$. 
        Moreover, by \eqref{eq-beta-mayor-a-1/2} it is easy to check that 
        \begin{equation*}
            a_{n+1} - a_{n} \geq \frac{\beta}{1-\beta} (a_{n} - a_{n-1})\geq 
            \left(\frac{\beta}{1-\beta}\right)^{n-1} (a_{2} - a_{1}).
        \end{equation*} 
        Thus, for $\beta \in [\tfrac12,1)$ we have that $u(x_n) = 1 + a_n \to \infty$ as $n\to\infty$.
    \end{proof}
    
    Observe that, having proved Theorems \ref{teo:existuni} , \ref{teo:formula-sol-DP-nolocal}, and \ref{teo:nonexist},
    we obtained Theorem \ref{teo.harmonicas2}.

    \subsection{Strong comparison principle}
    To end this section, we prove a strong comparison principle in the case $\beta\in(0,\tfrac12)$ and
    show that this does not hold for $\beta=0$.

    \begin{teo} 
        Let  $\beta \in(0,\tfrac12),$ $u,v$ $\beta-$harmonic 
        functions such that $u\leq v$ on $\T$ and $v(x_0) = u(x_0)$
        for some $x_0\in \T$. 
        Then, $$v\equiv u$$ in the whole tree.
    \end{teo}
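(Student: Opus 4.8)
The plan is to work with the difference $w \coloneqq v-u$. Since the $\beta$-harmonic equation is linear, $w$ is again $\beta$-harmonic; moreover $w\ge 0$ on $\T$ by hypothesis and $w(x_0)=0$. The structural fact to exploit is that for $\beta\in(0,\tfrac12)$ every weight appearing in the mean value formula — namely $\tfrac1m$ in the equation at the root and $\beta$ together with $\tfrac{1-\beta}{m}$ in the equation at any other vertex — is strictly positive. Consequently, a nonnegative $\beta$-harmonic function that vanishes at a vertex must vanish at every vertex that occurs on the right-hand side of the equation centered at that vertex.

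Concretely, I would first record a propagation step: if $w(x)=0$ for some $x\in\T$, then $w(y)=0$ for every $y\in\S(x)$, and, in addition, $w(\hat x)=0$ whenever $x\neq\emptyset$. Indeed, writing the mean value identity at $x$ — either $0=w(\emptyset)=\tfrac1m\sum_{j}w(\emptyset,j)$, or $0=w(x)=\beta w(\hat x)+\tfrac{1-\beta}{m}\sum_{i}w(x,i)$ — and using that each summand is $\ge0$ while the coefficients are $>0$, we are forced to conclude that every summand vanishes.

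With this step in hand the argument proceeds in two phases. First, starting from $x_0$ and applying the ``$w(\hat x)=0$'' half of the propagation step $|x_0|$ times, we climb the branch joining $x_0$ to the root and obtain $w(\emptyset)=0$. Second, we descend: from $w(\emptyset)=0$ the propagation step gives $w\equiv0$ on the first level, and then we argue by induction on the level $k$ that $w\equiv0$ on levels $k-1$ and $k$ forces $w\equiv0$ on level $k+1$. This is precisely the ``$w(y)=0$ for $y\in\S(x)$'' half applied at each vertex $x$ of level $k$: the equation at $x$ reads $0=w(x)=\beta w(\hat x)+\tfrac{1-\beta}{m}\sum_i w(x,i)$, the ancestor term $w(\hat x)$ has already been shown to vanish, and positivity of $\tfrac{1-\beta}{m}$ kills the remaining sum. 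Hence $w\equiv0$, that is, $u\equiv v$ on $\T$.

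I do not expect a serious obstacle; the only points requiring care are the bookkeeping in the downward induction (one genuinely needs both level $k-1$ and level $k$ to launch the step, since the equation at a level-$k$ vertex involves its level-$(k-1)$ ancestor) and the explicit use of $\beta>0$ in the upward phase. This is exactly why the statement breaks down for $\beta=0$: there the equation at $x_0$ does not see $w(\hat x_0)$, so vanishing at $x_0$ propagates only to the subtree hanging from $x_0$ and never reaches $\emptyset$, which allows one to exhibit two ordered harmonic functions on the arborescence tree agreeing at a single vertex without being identical.
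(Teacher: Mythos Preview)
Your proposal is correct and follows essentially the same approach as the paper: both exploit that at a vertex where $u-v$ (or $v-u$) vanishes, the mean value identity with strictly positive coefficients forces equality at every neighbor (successors and, crucially since $\beta>0$, the ancestor). The paper's proof is terser, writing the identity at $x_0$, concluding the one-step propagation to $\hat x_0$ and to $\S(x_0)$, and then simply asserting $u\equiv v$; your two-phase induction (climb to the root, then descend level by level) makes explicit the connectivity argument that the paper leaves implicit.
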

    
    \begin{proof} 
        Since $u,v$ are $\beta-$harmonic functions,  
        \[
            0 = u(x_0) - v(x_0) =
            \frac{1}{m} \sum_{j=0}^{m-1} [ u(x_0,j) - v(x_0,j) ] \le0,\quad \text{ if } x_0=\emptyset,
        \] 
        \[
            0 = u(x_0) - v(x_0) = \beta (u(\hat{x}_0) - v(\hat{x}_0)) + \frac{(1-\beta)}{m} 
            \sum_{j=0}^{m-1} [ u(x_0,j) - v(x_0,j) ],\quad \text{ if } x_0\neq\emptyset.
        \] 
        Then, each term in the sums must be zero due to $u\leq v$ on $\T,$ so \[u(x_0,j) - v(x_0,j)= u(\hat{x}_0) - v(\hat{x}_0) = 0,\] 
        for all $j\in\{0,\dots, m-1\}$. Hence 
        \begin{equation*} u\equiv v. \qedhere
        \end{equation*}
    \end{proof}

    For $\beta=0$ the strong comparison principle fails. Indeed, consider for instance $m=3$ and 
    $u\colon\mathbb{T}_3 \to\mathbb{R}$ the unique solution of the Dirichlet problem with 
    boundary condition $g$, where $g$ is a nonnegative continuous function with $\text{supp}(g)=[\tfrac{2}{3},1]$ and
    $$\int_{\tfrac23}^1 g(t) \dd t =1.$$
    then, using the formula for the solution $$u(x)= \fint_{I_x} g(t) \dd t,$$ 
    it follows that $u(\emptyset,0)=0$ but $u\not\equiv 0$ because $u(\emptyset,2)=1$. Notice that this example shows that 
    the strong maximum principle also fails (there are nontrivial nonnegative harmonic functions that vanish in an interior point).

\section{Dirichlet-to-Neumann maps}\label{section:DNlinealcase}

In this section we deal with the computation of the formulas for the two versions of the Dirichlet-to-Neumann maps
described in the introduction.

\subsection{First definition}
First, we deal with our first definition and consider
\[
            \Lambda_{\beta,\eta}(g)(\psi(\pi))\coloneqq
                \lim\limits_{k\to\infty} m^{|x_k|}\langle \nabla u_g(x_k),\eta \rangle
         \]
         for any $\pi=(x_1,\dots,x_k,\dots)\in\partial\T.$ Here $u_g$ is the $\beta-$harmonic function on $\T$ with boundary value $g$.

    \subsubsection{Case $\beta=0$}
        Let $\beta=0,$ $g\in C^2([0,1]),$ $\eta = (\eta_0,\dots,\eta_{m-1})\in\R^m$ 
        and $u$ be the solution of the Dirichlet problem with 
        boundary datum $g.$ 
        Given $\pi=(x_1, \dots, x_k,\dots)\in \partial\T,$ we now want to compute 
        \[
             \Lambda_{0,\eta}(g)(\psi(\pi))
             =\lim_{k\to\infty} m^{|x_k|}\langle \nabla u(x_k),\eta \rangle.
        \]
        
        \begin{lem}\label{lem:DNA1}
            Let $\beta=0,$ $g\in C^2([0,1]),$ $\eta = (\eta_0,\dots,\eta_{m-1})\in\R^m$ 
            and $u$ be the solution of the Dirichlet problem with 
            boundary datum $g.$ 
            Then for any $x\in\T$ we get
            \[
            \langle \nabla u(x),\eta \rangle= \dfrac{g'(t_x)}{m^{|x|}}\langle \eta,\omega_m\rangle+ O(m^{-2|x|}),
            \]
            where $t_x$ is the midpoint of $I_x$ and 
            \begin{equation}\label{omega-m}
                 \omega_m = 
                \begin{cases}
                    \left(\tfrac{1-m}{2m},\tfrac{3-m}{2m},\dots, \tfrac{-2}{2m}, 0, 
                    \tfrac{2}{2m},\dots,\tfrac{m-3}{2m},\tfrac{m-1}{2m}\right)
                    &\text{if } m \text{ is odd},\\[5pt]
                     \left(\tfrac{1-m}{2m},\tfrac{3-m}{2m},\dots, \tfrac{-2}{2m}, \tfrac{2}{2m},\dots,\tfrac{m-3}{2m},\tfrac{m-1}{2m}\right)
                     &\text{if } m \text{ is even}.
                \end{cases}
            \end{equation}
        \end{lem}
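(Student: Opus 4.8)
The plan is to use the explicit formula for the $\beta$-harmonic extension in the case $\beta=0$ provided by Theorem~\ref{teo.harmonicas2}, namely $u(x)=\fint_{I_x}g(t)\dd t$, combined with a second order Taylor expansion of $g$ around the midpoint $t_x$ of $I_x$. First I would fix the geometry of the intervals. Write $\ell\coloneqq m^{-|x|}$ for the length of $I_x=[\psi(x),\psi(x)+\ell]$, so that $t_x=\psi(x)+\tfrac{\ell}{2}$. The $m$ children intervals are $I_{(x,i)}=\big[\psi(x)+\tfrac{i\ell}{m},\psi(x)+\tfrac{(i+1)\ell}{m}\big]$, $i=0,\dots,m-1$, each of length $\tfrac{\ell}{m}$, and the midpoint of $I_{(x,i)}$ lies at signed distance
\[
c_i\coloneqq \Big(\psi(x)+\tfrac{(2i+1)\ell}{2m}\Big)-t_x=\frac{2i+1-m}{2m}\,\ell
\]
from $t_x$. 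The key observation is that $\tfrac{2i+1-m}{2m}$ is precisely the $i$-th coordinate of the vector $\omega_m$ in \eqref{omega-m}, for both parities of $m$ (when $m$ is odd the middle coordinate, corresponding to $i=\tfrac{m-1}{2}$, is $0$).

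Next I would Taylor expand. Since $g\in C^2([0,1])$, for every $t\in I_x$ there is $\xi$ between $t$ and $t_x$ with $g(t)=g(t_x)+g'(t_x)(t-t_x)+\tfrac12 g''(\xi)(t-t_x)^2$, hence $g(t)=g(t_x)+g'(t_x)(t-t_x)+R(t)$ with $|R(t)|\le \tfrac12\|g''\|_{\infty}\,(t-t_x)^2\le \tfrac12\|g''\|_{\infty}\,\ell^2$ uniformly for $t\in I_x$. Averaging this identity over $I_{(x,i)}$ and using that the average of $t-t_x$ over an interval equals its midpoint minus $t_x$, i.e. $\fint_{I_{(x,i)}}(t-t_x)\dd t=c_i$, we obtain
\[
u(x,i)=\fint_{I_{(x,i)}}g(t)\dd t = g(t_x)+g'(t_x)\,c_i+O(\ell^2).
\]
Averaging instead over $I_x$ and using $\fint_{I_x}(t-t_x)\dd t=0$ (since $t_x$ is the midpoint of $I_x$) gives $u(x)=g(t_x)+O(\ell^2)$. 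Subtracting,
\[
u(x,i)-u(x)=g'(t_x)\,c_i+O(\ell^2)=\frac{g'(t_x)}{m^{|x|}}\,(\omega_m)_i+O(m^{-2|x|}).
\]

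Finally I would take the inner product with $\eta$. Summing $\eta_i\big(u(x,i)-u(x)\big)$ over $i=0,\dots,m-1$ yields
\[
\langle \nabla u(x),\eta\rangle=\sum_{i=0}^{m-1}\eta_i\big(u(x,i)-u(x)\big)=\frac{g'(t_x)}{m^{|x|}}\sum_{i=0}^{m-1}\eta_i(\omega_m)_i+O(m^{-2|x|})=\frac{g'(t_x)}{m^{|x|}}\langle \eta,\omega_m\rangle+O(m^{-2|x|}),
\]
which is the claim. There is no serious obstacle in this argument; the only points demanding care are the identification of the midpoints of the child intervals with the coordinates of $\omega_m$, and the fact that the Taylor remainder bound $|R(t)|\le\tfrac12\|g''\|_{\infty}\ell^2$ is uniform in $x$, so that the error term is genuinely $O(m^{-2|x|})$ with a constant depending only on $m$, $\|\eta\|$ and $\|g''\|_{\infty}$, and not on the vertex $x$.
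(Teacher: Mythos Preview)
Your proof is correct and follows the same underlying idea as the paper's---insert the explicit formula $u(y)=\fint_{I_y}g$ and Taylor expand $g$ around the midpoint $t_x$---but the bookkeeping is organized differently. The paper first replaces $u(x)$ by $\tfrac1m\sum_j u(x,j)$ via harmonicity, rewrites $\langle\nabla u(x),\eta\rangle=\tfrac1m\sum_j c_j(\eta)\,u(x,j)$ with $c_j(\eta)=m\eta_j-\sum_i\eta_i$, kills the constant term using $\sum_j c_j(\eta)=0$, and then evaluates the linear contribution by pairing the integrals $\int_{I_{(x,j)}}(t-t_x)\,\dd t$ with their antisymmetric partners $\int_{I_{(x,m-1-j)}}(t-t_x)\,\dd t$, treating odd and even $m$ separately. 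You instead subtract $u(x)=\fint_{I_x}g$ directly from $u(x,i)=\fint_{I_{(x,i)}}g$, so the constant $g(t_x)$ cancels at once, and you compute the linear term in a single stroke by recognizing $\fint_{I_{(x,i)}}(t-t_x)\,\dd t$ as the midpoint offset $c_i=\tfrac{2i+1-m}{2m}\,\ell=(\omega_m)_i\,\ell$. Your route is shorter and avoids both the auxiliary coefficients $c_j(\eta)$ and the parity case split; the paper's route, in exchange, makes the structural identity $\sum_j c_j(\eta)=0$ (equivalently $\langle\omega_m,(1,\dots,1)\rangle=0$) more visible.
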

        
        \begin{proof}
            First, since $u$ is a harmonic function, given $x\in\T$, we get
            \begin{align*}
                \langle \nabla u(x),\eta \rangle =& 
                \sum_{i=0}^{m-1} \eta_i \left( u(x,i) - \frac{1}{m} \sum_{j=0}^{m-1} u(x,j)\right) \\
                = & \sum_{i=0}^{m-1} \eta_i u(x,i) - \frac{1}{m} 
                \sum_{j=0}^{m-1} \left(\sum_{i=0}^{m-1} \eta_i\right) u(x,j)\\
                = & \dfrac1m\sum_{j=0}^{m-1}  \left(m \eta_j - \sum_{i=0}^{m-1} \eta_i\right)  u(x,j).
            \end{align*} 
            To abbreviate we introduce the notation 
            \[
                c_j(\eta)\coloneqq m \eta_j - \sum_{i=0}^{m-1} \eta_i, \qquad j\in\{0,\dots,m-1\}
            \]
            and we have
             \begin{align*}
                \langle \nabla u(x),\eta \rangle = \dfrac1m\sum_{j=0}^{m-1}  c_j(\eta)  u(x,j).
            \end{align*} 
            On the other hand, by Theorem \ref{teo.harmonicas2}, we have that
            \[
                u(x,j)=\fint_{I_(x,j)} g(t) \dd t, \qquad \forall j\in\{0,\dots,m-1\}.
            \]
            Then, 
            \begin{align*}
                \langle \nabla u(x),\eta \rangle 
                = & \dfrac1m\sum_{j=0}^{m-1} c_j(\eta)  u(x,j)\\
                = & \dfrac1m\sum_{j=0}^{m-1}c_j(\eta)\fint_{I_{(x,j)}} g(t) \dd t\\
                = & m^{|x|}\sum_{j=0}^{m-1}c_j(\eta)\int_{I_{(x,j)}} g(t) \dd t.
            \end{align*} 
            
            Next, let us set $t_x$ as the midpoint of $I_x,$ that is
            \[
                t_x=\psi(x)+\dfrac{1}{2m^{|x|}}.
            \]
            
            Since $g\in C^2([0,1]),$ we now that for any $t\in (\psi(x),\psi(x)+\tfrac1{m^{|x|}})$ 
            there is a $\xi_{t_x,t}\in I_x$ such that
            \[
                g(t)=g(t_x)+g'(t_x)(t-t_x)+g''(\xi_{t_x,t})\dfrac{(|t-t_x|^2)}{2}.
            \]
            Then,
            \begin{equation}
                \label{eq:DN1}
    	        \begin{aligned}
                    &\langle \nabla u(x),\eta \rangle = m^{|x|} 
                    \sum_{j=0}^{m-1} c_j(\eta) \int_{I_{(x,j)}} g(t) d t \\ 
                    & \qquad =  m^{|x|} \sum_{j=0}^{m-1} c_j(\eta) 
                    \left( \frac{g(t_x)}{m^{|x|+1}} + g'(t_x) \int_{I_{(x,j)}} (t-t_x) \dd t + 
                    \frac{1}{2} \int_{I_{(x,j)}} g''(\xi_{t_x,t})(t-t_x)^2 \dd t \right).
                \end{aligned}  
            \end{equation}
            
            Observe that 
           \begin{equation}
                \label{eq:DN2}
                \begin{aligned}
                     m^{|x|} \sum_{j=0}^{m-1} c_j(\eta)  \frac{g(t_x)}{m^{|x|+1}}&= \frac{g(t_x)}{m} 
                    \sum_{j=0}^{m-1} c_j(\eta)  \\
                    &=\frac{g(t_x)}{m} \sum_{j=0}^{m-1} \sum\limits_{i=0}^{m-1} (\eta_j - \eta_i)\\
                    &= 0,
                \end{aligned}
            \end{equation}
            and
            \[
                \frac{m^{|x|}}{2}\left|\int_{I_{(x,i)}} g''(\xi_{t_x,t})(t-t_x)^2 \dd t \right|\le
                \dfrac{\|g''\|_{\infty}}{24}\dfrac1{m^{2|x|}}.
            \]
           Thus
            \begin{equation}
                \label{eq:DN3}
                 \frac{m^{|x|}}{2}\int_{I_{(x,i)}} g''(\xi_{t_x,t})(t-t_x)^2 \dd t=O( m^{-2|x|}).
            \end{equation}
                
            On the other hand, note that if $m$ is odd then for $j=0, \dots, \tfrac{m-3}2$ we have that 
            \begin{equation} \label{igualdad-integrales} 
                \int_{I_{(x,j)}} (t-t_x) \dd t = - \int_{I_{(x,m-1-j)}} (t-t_x)  \dd t 
            \end{equation} 
            and 
            \[
                \int_{I_{\left(x,\tfrac{m-1}{2}\right)}} 
                (t-t_x) \dd t = 0.
            \]
            
            Now, assuming that $m$ is odd, by \eqref{eq:DN1}, \eqref{eq:DN2}, \eqref{eq:DN3} and
            \eqref{igualdad-integrales} we obtain 
            \begin{align*}
                \langle \nabla u(x),\eta \rangle &= m^{|x|} g'(t_x) \sum_{j=0}^{\tfrac{m-3}2} 
                (c_j(\eta) - c_{m-1-j}(\eta)) \int_{I_{(x,j)}} (t-t_x) \dd t  + O(m^{-2|x|}) \\
                &=  m^{|x|} g'(t_x) \sum_{j=0}^{\tfrac{m-3}2} 
                \dfrac{\eta_j - \eta_{m-1-j}}{2m^{2|x|} } 
                \left( \frac{2j+1}{m} -1 \right) + O(m^{-2|x|})\\
                &= \frac{g'(t_x)}{2m^{|x|}}  \sum_{j=0}^{\tfrac{m-3}2} 
                (\eta_j - \eta_{m-1-j}) \left( \frac{2j+1}{m} -1 \right)+  O(m^{-2|x|})\\
                &=\dfrac{g'(t_x)}{m^{|x|}}\langle \eta,\omega_m\rangle+  O(m^{-2|x|}),
            \end{align*}
            where $\omega_m = (\tfrac{1-m}{2m},\tfrac{3-m}{2m},\dots, \tfrac{-2}{2m}, 0, \tfrac{2}{2m},\dots,\tfrac{m-3}{2m},\tfrac{m-1}{2m}).$

            On the other hand, if  $m$ is even then \eqref{igualdad-integrales} 
            holds for $j=0,\dots, \tfrac{m-2}2$. Then, proceeding as in the case that $m$ is 
            odd, we get
            \[
                \langle \nabla u(x),\eta \rangle= 
                \dfrac{g'(t_x)}{m^{|x|}}\langle \eta,\omega_m\rangle+  
                O(m^{-2|x|}),
            \]
             where $\omega_m = 
             \left(\tfrac{1-m}{2m},\tfrac{3-m}{2m},\dots, 
             \tfrac{-1}{2m}, 
             \tfrac{1}{2m},\dots,\tfrac{m-3}{2m},\tfrac{m-1}{2m}
             \right)$.
        \end{proof}
        
        Now, we can prove Theorem \ref{teo:intro.carDN1.1}.
        
        \begin{proof}[Proof of Theorem \ref{teo:intro.carDN1.1}]
            Let $\beta=0,$ $\eta = (\eta_1,\dots,\eta_m)\in\R^m$ 
            and $u$ be the solution of the Dirichlet problem with 
            boundary datum $g\in C^2([0,1]).$ 
            
            Given $\pi=(x_1,\dots,x_k,\dots)\in\partial\T,$ by Lemma \ref{lem:DNA1}, we get
             \[
                \langle \nabla u(x_k),\eta \rangle= \dfrac{g'(t_k)}{m^{|x_k|}}\langle \eta,\omega_m\rangle
                +  O(m^{-2|x_k|}),
            \]
            where $t_k$ is the midpoint of $I_{x_k}.$ 
            Then, since $g\in C^2([0,1])$ and 
            $\pi=(x_1,\dots,x_k,\dots)$ we get $t_k\to\psi(\pi)$ as $k\to\infty$ and
            \begin{align*}
                \Lambda_{0,\eta}(g)(\psi(\pi))
                &=\lim_{k\to \infty} m^{|x_k|} \langle \nabla u(x_k),\eta \rangle\\
                &= \lim_{k\to\infty} g'(t_k)\langle \eta,\omega_m\rangle 
                +  O(m^{-|x_k|})\\
                &=g'(\psi(\pi))\langle \eta,\omega_m\rangle.
                \qedhere
            \end{align*}
        \end{proof}
    
    \subsubsection{Case $0<\beta<\tfrac1{2}$}
        Now, we prove a explicit formula for the Dirichlet-to-Neumann map assuming that $\displaystyle\sum_{j=0}^{m-1}\eta_j=0$.

        \begin{proof}[Proof of Theorem \ref{teo:carDN1}]
            For any $x_k\in\T\setminus\{\emptyset\}$ and $i\in\{0,\dots,m-1\},$
            by \eqref{sol-no-local}, we get
            \[
			    u(x_k,i)=p u(x_k) + (1-p)
			    \fint_{I_{(x_k,i)}} g(t) \dd t.
		    \]
		    Then, since $\displaystyle \sum_{i=0}^{m-1}\eta_i=0,$ we have
		    \begin{align*}
		        \sum_{i=0}^{m-1}\eta_i(u(x_k,i)-u(x_k))&=
			    (p-1)u(x_k)\sum_{i=0}^{m-1}\eta_i
			    +(1-p)\sum_{i=0}^{m-1}\eta_i\fint_{I_{(x_k,i)}} g(t) \dd t\\
			    &=(1-p)\sum_{i=0}^{m-1}\eta_i\fint_{I_{(x_k,i)}} g(t) \dd t.
		    \end{align*}
		    
		    Since $g\in C^2([0,1]),$ we now that for any $t\in I_x$ 
            there is a $\xi_{\psi(\pi),t}\in I_x$ such that
            \[
                g(t)=g(\psi(\pi))+g'(\psi(\pi))(t-\psi(\pi))+
                g''(\xi_{\psi(\pi),t})\dfrac{(|t-\psi(\pi)|^2)}{2}.
            \]
		    Therefore, using again that 
		    $\displaystyle\sum_{i=0}^{m-1}\eta_i=0,$ 
		    we obtain
		    \begin{align*}
			    \sum_{i=0}^{m-1}\eta_i(u(x_k,i)-u(x_k))&=
			    (1-p)\sum_{i=0}^{m-1}\eta_i\fint_{I_{(x_k,i)}} 
			    (g(t)-g(\psi(\pi))) \dd t\\
			    &=(1-p)g'(\psi(\pi))
			    \sum_{i=0}^{m-1}\eta_i\fint_{I_{(x_k,i)}} 
			    (t-\psi(\pi)) \dd t+O(m^{-2|x_k|})\\
			    &=(1-p)g'(\psi(\pi))
			    \sum_{i=0}^{m-1}\eta_i
			    \left(\psi(x_k,i)+\dfrac1{2m^{|x_k|+1}}\right)
			    +O(m^{-2|x_k|})\\
			    &=(1-p)g'(\psi(\pi))
			    \sum_{i=0}^{m-1}\eta_i
			    \left(\psi(x_k)+\dfrac{i}{m^{|x_k|+1}}\right)
			    +O(m^{-2|x_k|})\\
			    &=(1-p)g'(\psi(\pi))
			    \sum_{i=0}^{m-1}\eta_i
			    \dfrac{i}{m^{|x_k|+1}}
			    +O(m^{-2|x_k|}).
		    \end{align*}
		    Thus
		    \[
		        \lim_{k\to\infty} m^{|x_k|}\sum_{i=0}^{m-1}\eta_i(u(x_k,i)-u(x_k))
		        =\frac{1-p}{m}g'(\psi(\pi))
			    \sum_{i=0}^{m-1} i \eta_i .
		        \qedhere
            \]
        \end{proof}

    \subsection{Second definition}
    Now we consider our second definition for the Dirichlet-to-Neumann map
     \[
            \Gamma_{\beta}(g)(\psi(\pi))\coloneqq
                \lim\limits_{k\to\infty} p^{-|x_k|}\left(u_g(x_{k+1})-u_g(x_k)\right),
         \]
         for any $\pi=(x_1,\dots,x_k,\dots)\in\partial\T.$
    
    \subsubsection{Case $\tfrac{1}{m+1}<\beta<\tfrac12$} 
        Given $x\in\T\setminus\{\emptyset\}$ and 
        $t\in I_{x^{-(|x|-1)}},$ we define $n(x,t)$ as follows
        \begin{itemize}
            \item If $t\in I_x$ then $n(x,t)\coloneqq |x|;$
            \item If $t\not\in I_x$ then $n(x,t)$ is the only number in $\{1,\dots,|x|-1\}$ such that $t\in I_{x^{-(|x|-n(x,t))}}$ and $t\not \in I_{x^{-(|x|-n(x,t) +1)}}.$
        \end{itemize}
        In fact
        \[
            n(x,t)= 
            \begin{cases}
                |x| &\text{if } t\in I_x;\\
                |x|-1 & \text{if } t\in I_{x^{-1}}\setminus I_x;\\
                \quad\vdots&\\
                \quad 1& \text{if } t\in I_{x^{-(|x|-1)}}\setminus I_{x^{-(|x|-2)}}.\\
            \end{cases}
        \]
        
        We are now ready to prove a technical lemma that will be relevant throughout the rest of this section.
        \begin{lem}\label{lemma:tcl}
            Let $g\in C^2([0,1]),$ 
            and $u$ be the solution of the Dirichlet problem with $0<\beta<\tfrac12$ and boundary datum $g.$
            Then for any $x\in\T\setminus\{\emptyset\},$ and $j\in\{0,\dots,m-1\}$
            \[
                u(x,j)-u(x)=-(1-p)\int_0^1 \mathcal{K}^j_m(x,t)g(t) \dd t   
            \]
            where
            \[
                \mathcal{K}^j_m(x,t)=
                \begin{cases}
                    p^{|x|}-\dfrac{m(1-p)}{m-p}
                        \left[p^{|x|}-m^{|x|}\right]-m^{|x|+1} &\text{if } t\in I_{(x,j)},\\[15pt]
                      p^{|x|}\left(1-m\dfrac{(1-p)}{m-p}
                        \left[1-\left(\dfrac{p}{m}\right)^{-n(x,t)}
                        \right]\CChi_{I_{x^{-(|x|-1)}}}(t)\right) 
                        &\text{if } t\not \in I_{(x,j)}.
                \end{cases}
            \]
        \end{lem}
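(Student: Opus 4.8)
The plan is to reduce everything to the explicit representation \eqref{sol-no-local} together with the one-step identity $u(z)=p\,u(\hat z)+(1-p)\fint_{I_z}g(t)\dd t$ established in the proof of Theorem \ref{teo:formula-sol-DP-nolocal}, applied with $z=(x,j)$ (so $\hat z=x$). Subtracting $u(x)$ gives
\[
u(x,j)-u(x)=(1-p)\Big(\fint_{I_{(x,j)}}g(t)\dd t-u(x)\Big).
\]
First I would rewrite both terms on the right as integrals of $g$ against explicit densities. Using $\fint_{A}g=|A|^{-1}\int_0^1\CChi_{A}(t)g(t)\dd t$ for subintervals $A\subset[0,1]$, together with \eqref{sol-no-local} and the lengths $|I_{(x,j)}|=m^{-(|x|+1)}$, $|I_{x^{-k}}|=m^{-(|x|-k)}$, this produces
\[
u(x,j)-u(x)=-(1-p)\int_0^1 \mathcal{K}^j_m(x,t)\,g(t)\dd t ,
\]
where
\[
\mathcal{K}^j_m(x,t)=p^{|x|}+\sum_{k=0}^{|x|-1}p^k(1-p)\,m^{|x|-k}\,\CChi_{I_{x^{-k}}}(t)-m^{|x|+1}\,\CChi_{I_{(x,j)}}(t).
\]
So everything reduces to evaluating this finite sum and matching it with the two cases in the statement.

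The point is the nesting $I_x=I_{x^{-0}}\subset I_{x^{-1}}\subset\cdots\subset I_{x^{-(|x|-1)}}$ and the fact that, for $t\in I_{x^{-(|x|-1)}}$, one has $\CChi_{I_{x^{-k}}}(t)=1$ precisely when $k\ge|x|-n(x,t)$ — a direct restatement of the definition of $n(x,t)$. I would then distinguish three cases. If $t\notin I_{x^{-(|x|-1)}}$, every characteristic function occurring in $\mathcal{K}^j_m$ vanishes (in particular $\CChi_{I_{(x,j)}}(t)=0$, since $I_{(x,j)}\subset I_{x^{-(|x|-1)}}$), so $\mathcal{K}^j_m(x,t)=p^{|x|}$. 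If $t\in I_{x^{-(|x|-1)}}$, put $n=n(x,t)$ and substitute $i=|x|-k$; the sum collapses to the geometric series
\[
(1-p)\,p^{|x|}\sum_{i=1}^{n}\Big(\frac{m}{p}\Big)^{i}=(1-p)\,p^{|x|}\,\frac{m}{m-p}\Big(\Big(\frac{m}{p}\Big)^{n}-1\Big),
\]
which is legitimate because $m>p$. If in addition $t\in I_{(x,j)}$, then $t\in I_x$ forces $n=|x|$, and using $p^{|x|}(m/p)^{|x|}=m^{|x|}$ one recovers the first line of the formula for $\mathcal{K}^j_m$; if instead $t\in I_{x^{-(|x|-1)}}\setminus I_{(x,j)}$, then $\CChi_{I_{(x,j)}}(t)=0$ and, rewriting $(m/p)^n-1=-\big(1-(p/m)^{-n}\big)$, one obtains the second line with $\CChi_{I_{x^{-(|x|-1)}}}(t)=1$. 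This case, together with the case $t\notin I_{x^{-(|x|-1)}}$ (where the same expression gives $p^{|x|}$), merge into the single second line of the lemma, under the harmless convention that the then-undefined factor $(p/m)^{-n(x,t)}$ is irrelevant whenever $\CChi_{I_{x^{-(|x|-1)}}}(t)=0$.

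No genuine conceptual obstacle is expected; the only delicate point is keeping the bookkeeping straight, since raising the exponent $k$ in $x^{-k}$ lowers the level of the vertex but enlarges the interval $I_{x^{-k}}$ (of length $m^{-(|x|-k)}$), so that the membership condition $t\in I_{x^{-k}}$ must be translated carefully into the summation range $k\ge|x|-n(x,t)$ that produces the geometric sum. Everything else is elementary algebra; in particular $C^2$ regularity of $g$ plays no role in this lemma, boundedness and integrability being enough for \eqref{sol-no-local} and hence for the representation above.
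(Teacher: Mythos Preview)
Your proof is correct and follows essentially the same route as the paper: both arrive at the identical intermediate kernel $p^{|x|}+\sum_{i=0}^{|x|-1}p^i(1-p)m^{|x|-i}\CChi_{I_{x^{-i}}}(t)-m^{|x|+1}\CChi_{I_{(x,j)}}(t)$ and then evaluate it via the same geometric-series case analysis. The only cosmetic difference is that you start from the one-step recursion $u(x,j)=p\,u(x)+(1-p)\fint_{I_{(x,j)}}g$ and then substitute the formula for $u(x)$, whereas the paper writes out both $u(x,j)$ and $u(x)$ from \eqref{sol-no-local} and subtracts directly; after that the computations coincide.
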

        \begin{proof}
            Given $x\in\T$ and $i\in\{0,\dots,m-1\},$ 
            by \eqref{formula.harmonica2}, we get
            \begin{align*}
                u(x)&=p^{|x|}\fint_0^1 g(t) \dd t +  
                        \sum_{i=0}^{|x|-1} p^i(1-p) \fint_{I_{x^{-i}}} g(t) \dd t\\
                u(x,j) &=p^{|x|+1}\fint_0^1 g(t) \dd t +  
                        \sum_{i=0}^{|x|} p^i(1-p) \fint_{I_{(x,j)^{-i}}} g(t) \dd t\\
                        &= p^{|x|+1}\fint_0^1 g(t) \dd t +  
                        \sum_{i=1}^{|x|} p^i(1-p) \fint_{I_{x^{-i+1}}} g(t) \dd t
                        +(1-p)\fint_{I_{(x,j)}} g(t)\dd t\\
                         &= p^{|x|+1}\fint_0^1 g(t) \dd t +  
                        \sum_{i=0}^{|x|-1} p^{i+1}
                        (1-p) \fint_{I_{x^{-i}}} g(t) \dd t
                        +(1-p)\fint_{I_{(x,j)}} g(t)\dd t.
            \end{align*}
            Then
            \begin{align*}
                &u(x,j)-u(x)=\\
                &=-p^{|x|}(1-p)\fint_0^1 g(t) \dd t -  
                        \sum_{i=0}^{|x|-1} p^{i}
                        (1-p)^2 \fint_{I_{x^{-i}}} g(t) \dd t
                        +(1-p)\fint_{I_{(x,j)}} g(t)\dd t\\
                &=-(1-p)
                \int_{0}^1 
                    \left\{
                        p^{|x|}+\sum_{i=0}^{|x|-1}
                        p^{i}(1-p)m^{|x|-i}\CChi_{I_{x^{-i}}}(t)
                        -m^{|x|+1}\CChi_{I_{(x,j)}}(t)
                    \right\}g(t) \dd t\\
                 &=-(1-p)m^{|x|}
                \int_{0}^1 
                    \left\{
                        \left(\dfrac{p}{m}\right)^{|x|}+(1-p)\sum_{i=0}^{|x|-1}
                        \left(\dfrac{p}{m}\right)^{i}\CChi_{I_{x^{-i}}}(t)
                        -m\CChi_{I_{(x,j)}}(t)
                    \right\}g(t) \dd t.
            \end{align*}

            Observe that, if $t\in I_{(x,j)}$
            \begin{align*}
                \left(\dfrac{p}{m}\right)^{|x|}+(1-p)\sum_{i=0}^{|x|-1}
                        \left(\dfrac{p}{m}\right)^{i}\CChi_{I_{x^{-i}}}(t)
                        &-m\CChi_{I_{(x,j)}}(t)=\\
                        &=\left(\dfrac{p}{m}\right)^{|x|}-\dfrac{m(1-p)}{m-p}
                        \left[\left(\dfrac{p}{m}\right)^{|x|}-1\right]-m.
            \end{align*}
            
            Whereas if $t\in I_{x^{-(|x|-1)}}\setminus I_{(x,j)}$ 
            then 
            \begin{align*}
                \left(\dfrac{p}{m}\right)^{|x|}+(1-p)\sum_{i=0}^{|x|-1}
                        \left(\dfrac{p}{m}\right)^{i}&\CChi_{I_{x^{-i}}}(t)
                        -m\CChi_{I_{(x,j)}}(t)=\\&=
                        \left(\dfrac{p}{m}\right)^{|x|}+(1-p)\sum_{i=|x|-n(x,t)}^{|x|-1}
                        \left(\dfrac{p}{m}\right)^{i}\CChi_{I_{x^{-i}}}(t)\\
                        &=\left(\dfrac{p}{m}\right)^{|x|}-m\dfrac{(1-p)}{m-p}
                        \left[\left(\dfrac{p}{m}\right)^{|x|}-\left(\dfrac{p}{m}\right)^{|x|-n(x,t)}\right].
            \end{align*}
            
            Finally, if $t\in[0,1]\setminus  I_{x^{-(|x|-1)}}$ then
             \[
                \left(\dfrac{p}{m}\right)^{|x|}+(1-p)\sum_{i=0}^{|x|-1}
                        \left(\dfrac{p}{m}\right)^{i}\CChi_{I_{x^{-i}}}(t)
                        -m\CChi_{I_{(x,j)}}(t)=\left(\dfrac{p}{m}\right)^{|x|}.
            \]

            Therefore,
            \[
                u(x,j)-u(x)=-(1-p)\int_0^1 \mathcal{K}^j_m(x,t)g(t) \dd t.
            \qedhere
            \]
        \end{proof}
        
        \begin{re}\label{remark:ik0}
            Observe that in the proof of the previous lemma we showed that
            \[
                m^{-|x|}\mathcal{K}^j_m(x,t)
                =\left(\dfrac{p}{m}\right)^{|x|}+(1-p)\sum_{i=0}^{|x|-1}
                        \left(\dfrac{p}{m}\right)^{i}\CChi_{I_{x^{-i}}}(t)
                        -m\CChi_{I_{(x,j)}}(t) \quad \forall x\in \T, t\in[0,1]. 
            \]
            Then, it holds that
            \begin{align*}
                  \int_0^1 \mathcal{K}_m(x,t) \dd t ={m^{|x|}}\int_{0}^1 
                    \Big\{
                        \left(\dfrac{p}{m}\right)^{|x|}+(1-p)\sum_{i=0}^{|x|-1}
                        \left(\dfrac{p}{m}\right)^{i}\CChi_{I_{x^{-i}}}(t)
                        -m\CChi_{I_{(x,j)}}(t)
                    \Big\} \dd t=0.
            \end{align*}
        \end{re}
        
        To end this section, we proceed with the proof of Theorem \ref{teo:carDN2}.

         \begin{proof}[Proof of Theorem \ref{teo:carDN2}]
            Let $\pi=(x_1,\dots,x_k,\dots)\in\partial\T.$
            Without loss of generality, we can assume that for any $k\in\mathbb{N}$  there is 
            $j_k\in\{0,\dots,m-1\}$ such that $x_{k+1}=(x_k,j_k).$ Then 
            $I_{x_k^{-(|x_k|-1)}}=I_{x_{1}}$ for any $k\in\mathbb{N}.$ Then, by Lemma \ref{lemma:tcl} and Remark \ref{remark:ik0}, we have that
            \[
                p^{-|x_k|}\left(u(x_{k+1})-u(x_k)\right)=
                (1-p)\int_0^1 p^{-|x_k|}\mathcal{K}^{j_k}_m(x_k,t)(g(\psi(\pi))-g(t)) \dd t   
            \]
            where
            \[
                p^{-|x_k|}\mathcal{K}^{j_k}_m(x_k,t)=
                \begin{cases}
                    p\dfrac{m-1}{m-p}-m\dfrac{m-1}{m-p}\left(\dfrac{m}{p}\right)^{|x_k|} &\text{if } 
                    t\in I_{x_{k+1}},\\[7pt]
                     p\dfrac{m-1}{m-p}+m\dfrac{m-1}{m-p}\left(\dfrac{m}{p}\right)^{n(x_k,t)} &\text{if } 
                    t\in I_{x_1}\setminus I_{x_{k+1}},\\[7pt]
                    1   &\text{if } 
                    t\in [0,1]\setminus I_{x_{1}}.
                \end{cases}
            \]
            Then
            \begin{equation}
                \label{eq:DN2.1}
                \begin{aligned}
                    \dfrac{ p^{-|x_k|}}{1-p}&\left(u(\psi(\pi))-u(x_k)\right)=\int_{0}^1 
                    \left[1+\left(p\dfrac{m-1}{m-p}-1\right)\CChi_{I_{x_1}}(t)\right]\left(g(\psi(\pi))-g(t)\right)\dd t\\
                    & \qquad +m\dfrac{1-p}{m-p}\int_{I_{x_1}\setminus I_{x_{k+1}}} 
                    \left(\dfrac{m}{p}\right)^{n(x_k,t)}
                    \left(g(\psi(\pi))-g(t)\right)\dd t\\
                    & \qquad -m\dfrac{1-p}{m-p}\left(\dfrac{m}{p}\right)^{|x_k|}\int_{ I_{x_{k+1}}} 
                    \left(g(\psi(\pi))-g(t)\right)\dd t\\
                    &=\int_{0}^1 
                    \left[1-m\dfrac{1-p}{m-p}\CChi_{I_{x_1}}(t)\right]\left(g(\psi(\pi))-g(t)\right)\dd t+\mathfrak{J}_1(k)+\mathfrak{J}_2(k).
                \end{aligned}
            \end{equation}
            Here,
            $$
             \mathfrak{J}_1(k)=m\dfrac{1-p}{m-p}\int_{I_{x_1}\setminus I_{x_{k+1}}} 
                    \left(\dfrac{m}{p}\right)^{n(x_k,t)}
                    \left(g(\psi(\pi))-g(t)\right)\dd t
            $$
            and
            $$
            \mathfrak{J}_2(k)= -m\dfrac{1-p}{m-p}\left(\dfrac{m}{p}\right)^{|x_k|}\int_{ I_{x_{k+1}}} 
                 \left(g(\psi(\pi))-g(t)\right)\dd t.
            $$

            Since $g\in C^2([0,1]),$ we have that for any $$t\in (\psi(x_{k+1}),\psi(x_{k+1})+\tfrac1{m^{|x_{k}|+1}})$$ 
            there is a $\xi_{k+1}\in(0,1)$ such that
            \[
                g(\psi(\pi))-g(t)=-g'(\psi(\pi))(t-\psi(\pi))-g''(\xi_{k+1})\dfrac{|t-\psi(\pi)|^2}{2}.
            \]
            Then,
            \begin{align*}
             \mathfrak{J}_2(k)=&-m\dfrac{1-p}{m-p}\left(\dfrac{m}{p}\right)^{|x_k|}\int_{ I_{x_{k+1}}} 
                 \left(g(\psi(\pi))-g(t)\right)\dd t\\
                 =&m\dfrac{1-p}{m-p}\left(\dfrac{m}{p}\right)^{|x_k|}\int_{ I_{x_{k+1}}} 
                 \left(g'(\psi(\pi))(t-\psi(\pi))+g''(\xi_{k+1})\dfrac{|t-\psi(\pi)|^2}{2}\right)\dd t\\
                  =&m\dfrac{1-p}{m-p}\left(\dfrac{m}{p}\right)^{|x_k|}
                  \left(
                  g'(\psi(\pi))\int_{ I_{x_{k+1}}}(t-\psi(\pi))\dd t+\int_{ I_{x_{k+1}}} 
                g''(\xi_{k+1})\dfrac{|t-\psi(\pi)|^2}{2}\dd t\right).
            \end{align*}
            Observe that
            \[
                \left|\int_{ I_{x_{k+1}}}(t-\psi(\pi))\dd t\right|= 
                \left|\dfrac{\psi(x_{k+1})-\psi(\pi)}{m^{|x_{k}|+1}}+
                \dfrac{1}{2m^{2|x_{k}|+2}}\right|\le \dfrac{C}{m^{2|x_k|+1}},
            \]
            and
            \[
                \left|\int_{ I_{x_{k+1}}} 
                 g''(\xi_{k})\dfrac{|t-\psi(\pi)|^2}{2}\dd t\right|
                 \le \dfrac{\|g''\|_{\infty}}2  \dfrac1{m^{3|x_k|+3}}.
            \]
            Then, since $\beta>\tfrac1{m+1}$ implies $p>\tfrac1m,$ we have that 
            \begin{equation}
                \label{eq:DN2.2} \lim_{k\to\infty}\mathfrak{J}_2(k)=0.
            \end{equation}
            
            Finally, we compute $$\lim_{k\to \infty}\mathfrak{J}_1(k).$$ 
            Note that 
            \[
                \lim_{k\to \infty} {n(x_k,t)} = \mathcal{N}(\psi(\pi),t) \quad \text{a.e. in } I_{x_1}.
            \]
            On the other hand, by definition of $n(\cdot,\cdot),$ we have that for any $k\in\mathbb{N}$ and $t\in I_{x_1}\setminus\{\psi(\pi)\}$
            \[
                |t-\psi(\pi)|\le \dfrac1{m^{n(x_k,t)}}. 
            \]
            Then, for any $k\in\mathbb{N}$ and $t\in I_{x_1}\setminus\{\psi(\pi)\}$
            \[
                -\log_m(|t-\psi(\pi)|)\ge n(x_k,t),
            \]
            and therefore for any $k\in\mathbb{N}$ and $t\in I_{x_1}\setminus\{\psi(\pi)\}$
            \[
                 \left(\dfrac{m}{p}\right)^{n(x_k,t)}
                    \left|g(\psi(\pi))-g(t)\right| \le
                    \dfrac{\left|g(\psi(\pi))-g(t)\right|}{|t-\psi(\pi)|^{1-\log_m(p)}}\le 
                    \dfrac{\|g'\|_{\infty}}{|t-\psi(\pi)|^{-\log_m(p)}}.
            \]
            
            Since $\tfrac1{m+1}<\beta<\tfrac{1}{2},$ we get $\tfrac{1}{m}<p<1$ and therefore
            $0<-\log_m(p)<1.$ It follows that
            \[
                \dfrac{1}{|t-\psi(\pi)|^{-\log_m(p)}}\in L^1(I_{x_1}).
            \]
            So, by dominated dominated convergence theorem
            \begin{equation}
                \label{eq:DN2.3}
                 \lim_{k\to \infty}\mathfrak{J}_1(k)=m\dfrac{1-p}{m-p}
                 \int_{I_{x_1} }
                    \left(\dfrac{m}{p}\right)^{\mathcal{N}(\psi(\pi),t)}
                    \left(g(\psi(\pi))-g(t)\right)\dd t.
            \end{equation}
             
             Thus, by \eqref{eq:DN2.1},  \eqref{eq:DN2.2}, and \eqref{eq:DN2.3}, we get
             \begin{align*}
                   \lim_{k\to\infty} & p^{-|x_k|}\left(u(\psi(\pi))-u(x_k)\right)=\\
                    &=(1-p)\int_{0}^1 
                    \left[1+m\dfrac{1-p}{m-p}
                    \left(\left(\dfrac{m}{p}\right)^{\mathcal{N}(\psi(\pi),t)}-1\right)
                    \CChi_{I_{x_1}}(t)\right]\left(g(\psi(\pi))-g(t)\right)\dd t.
                    \qedhere
            \end{align*}
        \end{proof}
         
        \begin{re} 
        	For any $\pi=(x_1,\dots,x_k, \dots)\in \partial\T$
        	one may be tempted to compute
        	\begin{align*}
                  \lim_{k\to\infty} 
                  	m^{|x_k|} &\left(u(\psi(\pi))-u(x_k)\right)=\\
                  	&=\lim_{k\to\infty} 
                  	(mp)^{|x_k|} (1-p)\int_{0}^1 
                    \left[1+\left(p\dfrac{m-1}{m-p}-1\right)
                    \CChi_{I_{x_1}}(t)\right]\left(g(\psi(\pi))-g(t)\right)\dd t\\
                    & \qquad 
                    +m\dfrac{(1-p)^2}{m-p}
                    (mp)^{|x_k|}\int_{I_{x_1}\setminus I_{x_{k+1}}} 
                    \left(\dfrac{m}{p}\right)^{n(x_k,t)}
                    \left(g(\psi(\pi))-g(t)\right)\dd t\\
                    & \qquad -m\dfrac{(1-p)^2}{m-p}m^{2|x_k|}\int_{ I_{x_{k+1}}} 
                    \left(g(\psi(\pi))-g(t)\right)\dd t.
                \end{align*}
            Unfortunately, we are unable to compute this limit in this way. This is due to the fact that, 
        	in general, the following limit 
        	\begin{align*}
				&\lim_{k\to\infty} 
					m^{2|x_k|}\int_{ I_{x_{k+1}}} 
                    \left(g(\psi(\pi))-g(t)\right)\dd t=\\
                    &=\lim_{k\to\infty} 
                	m^{|x_k|}g'(\psi(\pi))
                		 \dfrac{\psi(x_{k+1})-\psi(\pi)}{m}
                    	 +\dfrac{g'(\psi(\pi))}{2m^{2}}
                    	 +m^{2|x_k|} \int_{ I_{x_{k+1}}} 
                	g''(\xi_{k+1})\dfrac{|t-\psi(\pi)|^2}{2}\dd t
           \end{align*}
           does not exists.
        \end{re}
        
        \medskip

{\bf Acknowledgements.} Supported by CONICET grant PIP GI No 11220150100036CO
(Argentina), by  UBACyT grant 20020160100155BA (Argentina) and by MINECO MTM2015-70227-P
(Spain).


\end{document}